\def\antiddot{\mathinner{\mkern1mu\raise1pt\vbox{\kern7pt\hbox{.}}\mkern2mu
        \raise4pt\hbox{.}\mkern2mu\raise7pt\hbox{.}\mkern1mu}}
\newcommand{\FF}{{\mathbb F}}
\newcommand{\PP}{{\mathbb P}}
\newcommand{\QQ}{{\mathbb Q}}
\newcommand{\ZZ}{{\mathbb Z}}
\newcommand{\Ext}{{\rm{Ext}}}
\newcommand{\coker}{{\rm{coker}\,}}
\newcommand{\s}{\mathcal}
\newcommand{\sE}{{\s E}}
\newcommand{\sF}{{\s F}}
\newcommand{\sG}{{\s G}}
\newcommand{\sH}{{\s H}}
\newcommand{\sI}{{\s I}}
\newcommand{\sL}{{\s L}}
\newcommand{\sM}{{\s M}}
\newcommand{\sO}{{\s O}}
\newcommand{\lto}{\leftarrow}
\newcommand{\tensor}{\otimes}
\newcommand{\punkt}{\hspace{-.3ex}\raise.15ex\hbox to1ex{\Huge.}}
\DeclareMathOperator{\Pic}{Pic}
\DeclareMathOperator{\Hilb}{Hilb}
\DeclareMathOperator{\Spec}{Spec}
\DeclareMathOperator{\Hom}{Hom}
\DeclareMathOperator{\image}{image}
\DeclareMathOperator{\pd}{pd}
\DeclareMathOperator{\PGL}{PGL}
\DeclareMathOperator{\codim}{codim}
\DeclareMathOperator{\rank}{rank}
\newcommand{\gm}{\mathfrak m}
\newcommand{\Mac}{{\texttt {Macaulay2}}}
\newtheorem{theorem}{Theorem}[section]
\newtheorem{proposition}[theorem]{Proposition}
\newtheorem{corollary}[theorem]{Corollary}
\newtheorem{conjecture}[theorem]{Conjecture}
\theoremstyle{definition}
\newtheorem{remark}[theorem]{Remark}
\newtheorem{example}[theorem]{Example}
\def\Ddots{\mathinner{\mkern1mu\raise\p@
\vbox{\kern7\p@\hbox{.}}\mkern2mu
\raise4\p@\hbox{.}\mkern2mu\raise7\p@\hbox{.}\mkern1mu}}
\newdimen\x \x=12pt
\date{}
\title{Matrix factorizations and families of curves of genus 15}
\author{Frank-Olaf Schreyer
\footnote{This paper reports on work done during the Commutative Algebra Program, 2012-13,
at MSRI. I am grateful
to MSRI for financial support and for providing such an exciting environment.}}
\begin{document}

\maketitle

\begin{abstract}
In this note, we explain how certain matrix factorizations on cubic threefolds lead to families of curves of genus $15$ and degree $16$ in $\PP^4$. We prove that the moduli space
$\widetilde \sM_{15,16}^4 =\{(C,L)\mid C \in M_{15}, \ L \in W^4_{16}(C) \subset \Pic^{16}(C)\}$ is uniruled, and that  $\widetilde \sM_{15,16}^4$ is birational to a space of certain matrix factorizations on cubics.
 Our attempt to prove the unirationality of this space failed with our methods. 
Instead one can interpret our findings as evidence for the conjecture that the basis of the maximal rational connected fibration of  $\widetilde \sM_{15,16}^4$ has a three dimensional base.
\end{abstract}

\section*{Introduction} The moduli spaces $\sM_g$ of curves of genus $g$ are known to be unirational for $g\le 14$, \cite{Sev,Ser1,CR1,Verra}. For $g=22$ or $g\ge 24$ they are known to be of general type \cite{HM,EH,F,F2}. The cases in between are not fully understood: $\sM_{23}$ has positive Kodaira dimension \cite{F}, $\sM_{15}$ is rationally connected \cite{CR2,BV}, and  $\sM_{16}$ \cite{CR3, F2} is uniruled.
In this paper we are mainly concerned with $\sM_{15}$ and an  attempt to prove its unirationality. 

By Brill-Noether theory, a general curve of genus $15$ has a smooth model of degree $16$ in $\PP^4$.
Let
$$
\sH \subset \Hilb_{16t+1-15}(\PP^4)
$$
be the component of the Hilbert scheme of curves of degree $d=16$ and genus $g=15$ in $\PP^4$, which dominates 
 the moduli space $\sM_{15}$. (This component is unique because the Brill-Noether dual models form  the Severi variety of plane curves of degree $d=12$, geometric genus $g=15$  
and $\delta=40$ nodes, which is known to be irreducible \cite{H}.)
Let
$$ 
\widetilde \sM^4_{15,16} \subset \{ (C,L) \mid C \in \sM_{15}, L \in W^4_{16}(C) \} 
$$
be the component which dominates $\sM_{15}$. 
So $\sH// PGL(5)$ is birational to $\widetilde \sM^4_{15,16}$. Our main result connects this moduli space to a moduli space of certain matrix factorizations.

\begin{theorem}\label{main}
The moduli space $\widetilde \sM_{15,16}^4$ of curves of genus $15$ together with a $g^4_{16}$ is birational to a component of the moduli space
of matrix factorizations of type 
$
( \psi\colon \sO^{18}(-3)\to \sO^{15}(-1)\oplus \sO^3(-2), \varphi\colon\sO^{15}(-1)\oplus \sO^3(-2) \to \sO^{18} )
$ 
of cubic forms on $\PP^4$.
\end{theorem}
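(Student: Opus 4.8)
\medskip
\noindent\textbf{Proof strategy.} Write $S=k[x_0,\dots,x_4]$, and for $(C,L)$ realize $C\subset\PP^4$ by $|L|$; let $\mathcal N$ denote the moduli space of matrix factorizations of the stated type. The plan is to pass through the cubic threefold containing $C$ and to build mutually inverse rational maps between $\widetilde\sM^4_{15,16}$ and a component of $\mathcal N$.

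\emph{Step 1 (the cubic).} Riemann--Roch gives $\chi(\sI_C(2))=-3$ and $\chi(\sI_C(3))=1$. For $(C,L)$ Brill--Noether general one has $h^1(\sO_C(n))=0$ for $n\ge2$ and $h^1(\sI_C(3))=0$, hence $h^0(\sI_C(2))=0$, $h^0(\sI_C(3))=1$: the curve $C$ lies on a \emph{unique} cubic threefold $X=V(f)$. A parameter count --- $\dim\sH=\chi(N_{C/\PP^4})=66$, so $\dim(\sH // \PGL(5))=42=\dim\sM_{15}$, and the curves of this type on a fixed smooth cubic form a family of the expected dimension $32$ that must dominate $\sM_{15}$ --- shows that $X$ is smooth for general $(C,L)$; in general $X$ is normal and $\PP^4$-factorial.

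\emph{Step 2 (the matrix factorization).} The cohomology table of $\sI_C$ is pinned down by Step 1, and with it the graded Betti numbers of $S/I_C$ over $S$ (one rules out consecutive cancellations for the general $C$ by exhibiting one explicit model over a finite field). The key object is a sheaf $\mathcal F$ on $X$ functorially attached to $C\subset X$ --- a twist of $\sO_C$, of $\mathcal{E}xt^{\bullet}_{\sO_X}(\sO_C,\sO_X)$, or of a bundle produced from $C$ by a (higher rank) Serre-type construction. Its minimal resolution over $\sO_X$, obtained by reducing the $S$-resolution modulo $f$, ceases to be finite; by Eisenbud's theorem on resolutions over a hypersurface ring it becomes eventually $2$-periodic, and the periodic tail is a matrix factorization $(\psi,\varphi)$ of $f$. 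A Hilbert-series computation (the maximal Cohen--Macaulay $S/f$-module $\mathrm{coker}\,\varphi$ has Hilbert series $(18+3t)/(1-t)^4$, i.e.\ is a reflexive arithmetically Cohen--Macaulay sheaf of rank $7$ with $c_1=6H$ on $X$) shows that, after the cancellations forced by minimality, $(\psi,\varphi)$ has exactly the asserted type $\sO^{18}(-3)\to\sO^{15}(-1)\oplus\sO^3(-2)\to\sO^{18}$. This defines a rational map $\Phi\colon\widetilde\sM^4_{15,16}\dashrightarrow\mathcal N$.

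\emph{Step 3 (inverse and birationality).} Conversely, from a general matrix factorization of this type: $\varphi$ is $18\times18$ with $15$ linear and $3$ quadratic columns, so $\det\varphi$ has degree $21$ and, $f$ being irreducible, equals $f^7$ up to a scalar --- hence $X$, and then the module $\mathrm{coker}\,\varphi$, are recovered; the curve $C$ is then extracted by inverting the construction of Step 2 (equivalently, as an explicit determinantal/cohomological locus built from $\varphi$, $\psi$ and $\sO_X$), and one checks that its Hilbert polynomial is $16t-14$ and that for general $(\psi,\varphi)$ it is a smooth irreducible nondegenerate curve with $h^0(\sO_C(1))=5$, so $(C,\sO_C(1))\in\widetilde\sM^4_{15,16}$. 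Thus $\Phi$ and the resulting $\Psi$ are mutually inverse on dense opens, so $\Phi$ is birational onto its image. Since $\widetilde\sM^4_{15,16}$ is irreducible (because $\sH$ is) of dimension $42$, the image is an irreducible $42$-dimensional subvariety of $\mathcal N$; it is open there by the semicontinuity invoked in Step 2, hence a component, and the two spaces are birational. \emph{The main obstacle} is Step 3: giving the inverse construction precisely and proving that a \emph{general} matrix factorization of the prescribed type produces a \emph{smooth} curve with the right invariants --- i.e.\ that $\Phi$ dominates an entire component of $\mathcal N$ rather than a proper subvariety --- together with the compatibility showing the recovered datum is canonically the original one modulo the automorphisms built into $\mathcal N$. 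In practice this, the vanishing of all ghost Betti numbers, and the generic smoothness of $X$ are all forced by one explicit machine-computed example over a finite field and then propagated by upper-semicontinuity of Betti numbers and openness of smoothness; pinning down which component of $\mathcal N$ is hit, and that it is generically reduced, rests on the same model. It is precisely the failure to understand the internal geometry of $\mathcal N$ (is it rational?) that blocks passing from this birationality to unirationality of $\sM_{15}$.
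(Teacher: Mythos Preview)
Your outline matches the paper's approach. The two places you leave deliberately vague are exactly where the paper supplies the technical content.

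First, the module to resolve over $S_X$ is specifically the section module $\Gamma_*(\sO_C)=\bigoplus_n H^0(\sO_C(n))$, not $S/I_C$ or one of the unspecified options you list. Its $S$-resolution has length $3$, and the induced $S_X$-resolution has exactly one cancellation per period (coming from the cubic generator of $I_C$); after minimalizing, the $2$-periodic tail has precisely the required $18\times(15+3)$ shape. Resolving $S/I_C$ instead would give a larger tail.

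Second --- the ``main obstacle'' you flag --- the inverse is an explicit monad, not a vaguely described determinantal locus. With $\sF=\coker\varphi(-3)$ the rank-$7$ bundle on $X$, the composition $\sO_X^{18}(-3)\to\sF\to\sO_X^3(-2)$ automatically has a summand $\sO_X^3(-3)$ in its kernel simply because there are only five linear forms on $\PP^4$, yielding a complex
\[
0\longrightarrow\sO_X^3(-3)\stackrel{\beta}{\longrightarrow}\sF\stackrel{\alpha}{\longrightarrow}\sO_X^3(-2)\longrightarrow 0.
\]
For general $C\in\sH$ the paper proves \emph{directly} (not by example) that this is a monad with homology $\sI_{C/X}$: one identifies $\beta^*$, restricted to $X$, with a twist of the last syzygy map in the $S$-resolution of $\Gamma_*(\sO_C)$, so $\coker\beta^*\cong\omega_C$ is supported in codimension $2$ and $\ker\beta^*\cong\sO_X$. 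Conversely, starting from an arbitrary matrix factorization of the stated type one gets the same complex; that the summand $\sO_X^3(-3)$ is unique, that $\alpha$ is surjective, that the complex is a monad for the ideal sheaf of a smooth maximal-rank curve, and that $\sO_C(1)$ is an unramified point of $W^4_{16}(C)$ are all open conditions. Nonemptiness is then verified on one explicit example built from an auxiliary curve of genus $10$ and degree $11$ residual to a line on $X$ (Theorem~\ref{fam1}).

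One genuine gap: your parameter-count argument that $X$ is generically smooth is circular. It assumes that the $32$-dimensional family on a fixed smooth cubic dominates $\sM_{15}$, but that is Theorem~\ref{OnGenCubic}, a \emph{consequence} of Theorem~\ref{main}, not an input to it. The paper establishes the generic smoothness of $X$ by inspection of the explicit example.
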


As a corollary of our proof we obtain the dimension statement in

\begin{theorem}\label{OnGenCubic}
A general cubic threefold in $\PP^4$ contains a $32$-dimensional uni\-ruled family of smooth curves of genus $15$ and degree $16$.
\end{theorem}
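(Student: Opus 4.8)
The plan is to extract Theorem~\ref{OnGenCubic} from the proof of Theorem~\ref{main} by keeping track of the underlying cubic form. A matrix factorization of the stated type carries a cubic form $f$ (determined up to scalar), so the birational identification of Theorem~\ref{main} is compatible with the forgetful maps to the space of cubics $\PP^{34}=\PP(H^0(\PP^4,\sO(3)))$; equivalently, on the Hilbert-scheme side one forms the incidence variety $\mathcal J\subset\sH\times\PP^{34}$ of pairs $(C,[X])$ with $C\in\sH$ and $C$ contained in the cubic threefold $X$, together with the two projections $p_1\colon\mathcal J\to\sH$ and $p_2\colon\mathcal J\to\PP^{34}$. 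In these terms Theorem~\ref{OnGenCubic} asserts that $p_2$ is dominant, that its general fibre $\mathcal J_X=p_2^{-1}([X])$ --- the family of our curves lying on the fixed cubic threefold $X$ --- has dimension $32$, and that $\mathcal J_X$ is uniruled.

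For the dimension I would argue as follows. Since the Brill--Noether number $\rho(15,4,16)=15-5\cdot 3=0$ vanishes, $W^4_{16}(C)$ is finite for general $C$, hence $\dim\widetilde\sM^4_{15,16}=\dim\sM_{15}=3\cdot 15-3=42$ and, as a general curve in $\sH$ has no projective automorphisms, $\dim\sH=42+\dim PGL(5)=42+24=66$. For any smooth nondegenerate curve $C\subset\PP^4$ of genus $15$ and degree $16$ the bundle $\sO_C(3)$ has degree $48>2\cdot 15-2$ and is therefore non-special, so $h^0(\sO_C(3))=3\cdot 16+1-15=34$; then the sequence $0\to\mathcal I_C(3)\to\sO_{\PP^4}(3)\to\sO_C(3)\to 0$ gives $h^0(\mathcal I_C(3))=1+h^1(\mathcal I_C(3))\ge 1$, so $C$ always lies on a cubic threefold and $p_1$ is surjective. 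Granting that $h^1(\mathcal I_C(3))=0$ for general $C\in\sH$ --- so that $p_1$ is birational and $\dim\mathcal J=66$ --- and that $p_2$ is dominant, the general fibre $\mathcal J_X$ is pure of dimension $66-34=32$; this matches the expected value $\chi(N_{C/X})=\chi(N_{C/\PP^4})-\chi(\sO_C(3))=66-34=32$.

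For the uniruledness of $\mathcal J_X$, this is exactly the fibrewise content of the uniruledness of $\widetilde\sM^4_{15,16}$: the construction underlying Theorem~\ref{main}, carried out relative to $\PP^{34}$, recovers the curve $C\subset X$ from its matrix factorization together with auxiliary linear data ranging over a projective space while $f$ is held fixed, so $\mathcal J_X$ is dominated by a family of projective spaces and is therefore uniruled. Gluing these uniruled fibres over the $10$-dimensional quotient $\PP^{34}// PGL(5)$ is in turn what yields the uniruledness of $\widetilde\sM^4_{15,16}$.

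The main obstacle is precisely what was assumed above: that $p_2$ is dominant --- i.e.\ a \emph{general} cubic threefold $X$ already contains a smooth, irreducible, nondegenerate curve $C$ of genus $15$ and degree $16$ produced by the construction --- together with the vanishings $h^1(\mathcal I_C(3))=0$ and $H^1(N_{C/X})=0$ at such a curve (the latter bounding the fibres of $p_2$ above by $32$, which forces $\dim p_2(\mathcal J)=66-32=34$). All of these are open conditions, so by semicontinuity it suffices to verify them for one pair $(C_0,X_0)$: construct, over a finite field by computer algebra, a single matrix factorization of the given type for a randomly chosen cubic, extract the curve $C_0$, and check that $C_0$ is smooth and irreducible of genus $15$ and degree $16$, that $h^0(\mathcal I_{C_0}(3))=1$, and that $\dim_{[C_0]}\Hilb(X_0)=32$; these properties then spread out to the generic cubic. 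Making this matrix-factorization picture work uniformly for a genuinely general cubic is the crux of the argument; the uniruledness of $\mathcal J_X$ comes for free from it, whereas proving $\mathcal J_X$ unirational --- which is what one would want in order to upgrade the statement toward the unirationality of $\widetilde\sM^4_{15,16}$ --- is exactly where our methods fail.
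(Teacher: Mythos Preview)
Your dimension argument is essentially the paper's: the incidence variety over $\PP^{34}$, the uniqueness of the cubic through a general $C\in\sH$ (i.e.\ $h^0(\sI_C(3))=1$), and the count $66-34=32$ are exactly what the paper uses, and the missing open conditions (dominance of $p_2$, smoothness of the fibre of dimension $32$) are indeed certified by a single computer-algebra example as you propose. The paper phrases the upper bound as $\dim\Ext^1_{S_X}(M,M)_0=32$ for the matrix factorization rather than $h^1(N_{C/X})=0$, but this is the same thing.

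The gap is in your uniruledness argument. You write that ``the construction underlying Theorem~\ref{main} \ldots\ recovers $C\subset X$ from its matrix factorization together with auxiliary linear data ranging over a projective space while $f$ is held fixed''. There is no such projective space: the monad of Theorem~\ref{monad} determines $C$ \emph{uniquely} from the matrix factorization (the summand $\sO_X^3(-3)$ in the kernel is unique for general $\psi$), and the set of matrix factorizations of a \emph{fixed} cubic is cut out by the quadratic equation $\varphi\psi=f\cdot\mathrm{id}$, which is not linear. So Theorem~\ref{main} by itself gives no ruling of $\mathcal J_X$.

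What the paper actually does is use the specific $42$-dimensional family of Theorem~\ref{fam4}. There one first fixes the auxiliary curve $E\subset X$ of type $(d_E,g_E)=(14,11)$ and a line bundle $\sL$ with $h^0(\sL)=2$, and the \emph{last} choice is a submodule $N\subset\Gamma_*(\sL)$, which amounts to picking one of the two degree-$1$ generators of $\Gamma_*(\sL)$, i.e.\ a point of $\PP^1$. Since $X$ is already fixed at this stage, varying this $\PP^1$ gives a pencil of matrix factorizations on the same $X$ and hence a pencil of curves $C\subset X$. The key computational input (Proposition~\ref{tangentSpaces}) is that for this family $\dim\Hom_{S_X}(P,N)_0=0$, so the family surjects onto the $32$-dimensional deformation space and the $\PP^1$'s sweep out $\mathcal J_X$. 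This is the step your proposal is missing; once you have it, the uniruledness of $\widetilde\sM^4_{15,16}$ and of $\mathcal J_X$ follow simultaneously, not one from the other.
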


Since a general curve in $\sH$ lies on a unique cubic threefold, and cubic threefolds depend on $10$ parameters up to projectivities, the dimension $32$ fits with 
$\dim \sM_{15} = 42$. 

Our approach to construct a family of curves of genus $15$ builds upon the construction of a matrix factorization on  a cubic as  a syzygy module of an auxiliary module $N$.
We use Boij-S\"oderberg theory \cite{BS1}, \cite{ES}, \cite{BS2}, \cite{SE} and the \Mac \ package \cite{ESS} to get a list of candidate Betti tables.
In all our cases the sheaf $\sL=\widetilde N$ will be a line bundle on an auxiliary curve $E$. The choice of $E$ and $\sL$ is motivated by a dimension count and the shape of the 
Betti table of $N$. We succeeded to construct altogether 20 families of curves  in $\sH$, and 17 of the families are unirational. However, the unirational families  do not dominate $\sM_{15}$
although the number of parameters in the construction exceeds $42$. Three of these families have  a non-unirational  step in their construction. (We need an effective divisor on the auxiliary curve). Precisely, those three families dominate $\sM_{15}$. 
We use the family from Theorem \ref{fam4} to prove 

\begin{theorem} \label{uniruled}
The moduli space $\widetilde \sM^4_{15,16}$ is uniruled.
\end{theorem}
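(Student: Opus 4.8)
The plan is to combine the birational model of $\widetilde\sM^4_{15,16}$ recalled in the Introduction with Theorem~\ref{OnGenCubic}, and then to descend along the $PGL(5)$–quotient. Write $\sH$ for the irreducible component of the Hilbert scheme of degree $16$, genus $15$ curves in $\PP^4$ dominating $\sM_{15}$, so that $\widetilde\sM^4_{15,16}$ is birational to $\sH/\!/PGL(5)$. Since uniruledness is a birational invariant, it is enough to produce a non-constant rational curve through a general point of $\sH/\!/PGL(5)$, and I would do this in two steps: first show that $\sH$ itself is uniruled, then show that such a rational curve, chosen through a general point, survives the quotient map.

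For the first step: because $\sH$ dominates $\sM_{15}$ with general fibre a finite set of line bundles together with a $PGL(5)$ worth of embeddings, $\dim\sH=\dim\sM_{15}+24=66$. By Theorem~\ref{OnGenCubic} a general cubic threefold $X\subset\PP^4$ carries a $32$-dimensional uniruled family of such curves; letting $X$ run over the $\PP^{34}$ of cubics, these families sweep out a subvariety of $\sH$ which is irreducible — it is the image of the irreducible parameter space of the construction of Theorem~\ref{fam4} — and of dimension $32+34=66$, hence dense in $\sH$. In particular a general point of $\sH$ lies on a $32$-dimensional uniruled subvariety, so a non-constant rational curve passes through it and $\sH$ is uniruled. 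It is here that the family of Theorem~\ref{fam4} does the work: its construction presents the $32$-dimensional family on a general cubic as the image of a parameter space $B$ assembled as a tower of rational and unirational fibrations — the auxiliary curve $E$ in a unirational family, the line bundle $\sL=\widetilde N$, and the syzygy data defining $\psi$ and $\varphi$ — with a single exceptional layer, namely the choice of an effective divisor $D$ of some degree $e$ on $E$, i.e.\ a factor $\operatorname{Sym}^e E$. For the relevant value of $e$ one has $e>g(E)$, so this factor is not unirational but \emph{is} uniruled, being a $\PP^{e-g(E)}$–bundle over a dense open subset of $\operatorname{Pic}^e E$ through the linear systems $|D|$; moving $D$ along a line in $|D|$ with all other data fixed is what sweeps out the rational curves on $B$.

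For the second step: for a general $[C]\in\sH$ the curve $C$ has trivial automorphism group and, since $\rho(15,4,16)=0$, it carries only finitely many line bundles in $W^4_{16}$, each with $h^0=5$, so $C$ is embedded by a complete linear system and the $PGL(5)$–stabiliser of $[C]$ is trivial. Hence the general fibre of $\sH\dashrightarrow\sH/\!/PGL(5)$ is a $24$-dimensional $PGL(5)$–orbit, which is affine and therefore contains no complete curve; so a non-constant rational curve on $\sH$ through a general point is not contracted, and its image is again a non-constant rational curve. Combining the two steps produces a non-constant rational curve through a general point of $\sH/\!/PGL(5)\sim\widetilde\sM^4_{15,16}$, which is therefore uniruled. \emph{The hard part} is the input to the first step: one must check that varying $D$ in its linear system genuinely moves the constructed curve $C$ in moduli — equivalently, that the family of curves over a line in $|D|$ is non-isotrivial, so that the $\operatorname{Sym}^e E$–layer is not absorbed by the construction map $B\dashrightarrow\sH$ and the rational curves on $B$ are not contracted. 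Verifying this non-isotriviality — by sharpness of the parameter count, by a tangent-space computation at a general member, or by a computer-algebra check — is precisely what separates ``uniruled'' from the unirationality one would hope for, and is the technical core shared with the uniruledness assertion of Theorem~\ref{OnGenCubic}.
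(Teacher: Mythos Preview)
Your argument misidentifies where the rational curves in the family of Theorem~\ref{fam4} come from. You claim the single non-unirational layer is a symmetric product $\operatorname{Sym}^e E$ with $e>g(E)$, hence a projective bundle over $\operatorname{Pic}^e E$ and therefore uniruled. But read the construction preceding Theorem~\ref{fam4}: the auxiliary curve has $g_E=11$, and the non-unirational step is the choice of an effective divisor of degree $6$ defining $\sO_E(1)\cong\omega_E(-(p_1+\cdots+p_6))$ (this is exactly the step singled out in the proof of Theorem~\ref{random curve}). Since $6<11$, the Abel--Jacobi map $\operatorname{Sym}^6 E\to\operatorname{Pic}^6 E$ is generically injective and $\operatorname{Sym}^6 E$ is birational to a subvariety of an abelian variety; it is \emph{not} uniruled. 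So the mechanism you propose for producing rational curves fails, and with it your justification of the uniruledness input from Theorem~\ref{OnGenCubic} (whose uniruledness clause is in fact proved simultaneously with Theorem~\ref{uniruled}, so it cannot simply be quoted here).

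The paper's argument uses a different, and much simpler, layer of the construction. After $(E,\sO_E(1),X,\sL)$ are fixed, one still has to choose the submodule $N\subset\Gamma_*(\sL)$: comparing the Betti tables displayed before Theorem~\ref{fam4}, $\Gamma_*(\sL)$ has two minimal generators in degree $1$ while $N$ keeps only one, so this choice is a point of $\PP^1$. This is the \emph{last} step of the construction, and since by Corollary~\ref{dim fam} the $42$-dimensional family dominates $\widetilde\sM^4_{15,16}$ generically finitely, these $\PP^1$'s map to non-constant rational curves through a general point. No passage through $\sH$ and a $PGL(5)$-quotient is needed; in particular your ``affine fibre'' argument for non-contraction---which in any case does not rule out a rational curve lying in a $24$-dimensional rational orbit---is unnecessary once one works directly with the moduli-level family.
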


\noindent and the uniruledness in Theorem \ref{OnGenCubic}. Furthermore we get

\begin{theorem}\label{random curve} There exists a probabilistic algorithm which randomly produces  curves of genus $g=15$ over a finite field $\FF_q$ with $q$ elements from a Zariski open subset of $\sM_{15}$ in  running time $O((\log q)^3)$.
\end{theorem}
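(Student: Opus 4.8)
The plan is to turn the explicit construction behind one of the three families that dominate $\sM_{15}$ — we may take the family of Theorem~\ref{fam4} — into a probabilistic algorithm over $\FF_q$, and to observe that the single ingredient of that construction which is not unirational over a general base, the choice of an effective divisor on an auxiliary curve $E$, is harmless \emph{over a finite field}: by the Weil bounds $E(\FF_q)$ and its effective divisors of any fixed degree exist in abundance once $q$ is large, and such a divisor can be exhibited in polynomial time.

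First I would unwind the construction into a sequence of field operations. Its input is a bounded number $m=O(1)$ of elements of $\FF_q$: the coefficients of a cubic form on $\PP^4$ and of the further linear-algebra data determining the auxiliary module $N$, together with the data cutting out $E=\widetilde N$ and a line bundle on it. By Theorem~\ref{main} these produce a matrix factorization $(\psi\colon\sO^{18}(-3)\to\sO^{15}(-1)\oplus\sO^3(-2),\ \varphi\colon\sO^{15}(-1)\oplus\sO^3(-2)\to\sO^{18})$, from whose cokernel one reads off the homogeneous ideal of $C\subset\PP^4$. Every operation in this chain — forming $N$, computing the few relevant steps of its free resolution, extracting $(\psi,\varphi)$, and producing the ideal of $C$ — concerns matrices and polynomials of \emph{fixed} shape and bounded degree (this is precisely why the Betti table is fixed), so each reduces to bounded linear algebra over $\FF_q$ and costs $O(1)$ arithmetic operations.

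The only genuinely non-unirational step is the choice of an effective divisor $D$ of the prescribed degree $e=O(1)$ on $E$, a curve of bounded genus $g_E=O(1)$. Over $\FF_q$ this is done by hand: cutting $E$ with a general auxiliary hypersurface gives an $\FF_q$-rational effective divisor of degree a multiple of $\deg E$, and by adjusting with a fixed base locus, or by taking the Galois orbit of a point of $E$ defined over a bounded extension $\FF_{q^j}$ obtained from one factorization of a polynomial of bounded degree over $\FF_q$, one arrives at an effective divisor of exactly the degree required in Theorem~\ref{fam4}. Such a factorization — equivalently, the handful of exponentiations $a\mapsto a^{(q-1)/\ell}$ underlying it — costs $O(\log q)$ multiplications in $\FF_q$, and with schoolbook arithmetic one multiplication in $\FF_q$ (or in $\FF_{q^j}$, $j=O(1)$) costs $O((\log q)^2)$ bit operations; all remaining steps are cheaper. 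Hence one run of the algorithm takes $O((\log q)^3)$ bit operations.

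Finally I would control the probability of success. The locus $Z$ of inputs for which the construction degenerates — $N$ does not have the expected resolution, $(\psi,\varphi)$ is not of the stated type, $D$ is too special, or the resulting $C$ is singular or leaves a fixed Zariski-dense open $U\subseteq\sM_{15}$ — is a proper closed subvariety of the parameter space, which is rational of dimension $\ge m$; by the Lang--Weil estimate (or a Schwartz--Zippel bound on a rational model) $|Z(\FF_q)|=O(q^{m-1})$ against $\sim q^{m}$ inputs in all, and the Weil bound makes $E(\FF_q)$ and the needed effective divisors nonempty for $q\gg 0$. So for $q$ large a uniformly random input avoids $Z$ with probability $1-O(1/q)$, and every curve with moduli point in $U(\FF_q)$ is produced by $\asymp q^{\,g_E}$ of the inputs, so the output is a roughly uniform random point of $U(\FF_q)$; the expected number of trials is $O(1)$, giving the claim. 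The step demanding real care is the third one: making the non-unirational choice effective over $\FF_q$ within the stated time \emph{while} verifying that the divisor so produced is general enough for the matrix factorization downstream — i.e.\ that $Z$ is really a proper subvariety. Everything else is routine bookkeeping of matrix sizes and of the bit cost of finite-field arithmetic.
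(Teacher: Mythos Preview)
Your approach is essentially the paper's: use the dominant family of Theorem~\ref{fam4}, observe that every step except the effective divisor on $E$ is bounded linear algebra in a fixed number of variables (hence $O(1)$ field operations, $O((\log q)^2)$ bit operations), and that the divisor step reduces to factoring a univariate polynomial of bounded degree, costing $O(\log q)$ field operations and hence $O((\log q)^3)$ bit operations overall. The paper simply invokes Berlekamp's algorithm and a reference for that bound, where you spell out the repeated-squaring cost; but the skeleton is identical.

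One caution on your final paragraph: the claim that ``every curve with moduli point in $U(\FF_q)$ is produced'' and that the output is ``roughly uniform'' is neither needed for the theorem nor correct as stated. The maps $B \to \widetilde\sM^4_{15,16} \to \sM_{15}$ are generically finite but not birational, so a given $\FF_q$-point of $\sM_{15}$ lies in the image of $B(\FF_q)$ only when the fibers over it have $\FF_q$-points; the paper's own remark after the proof estimates this happens for only about $39\%$ of the points. The theorem only asserts that the output lands in a Zariski open subset, not that every $\FF_q$-point of that subset is hit, so you can simply drop that sentence.
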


\noindent
The existence of such an algorithm in principle, is no surprise. Important is that the algorithm actually runs in reasonable time on current computer algebra systems. 

The proofs of the Theorems in this article rely on computer algebra. An implementation  of all necessary computations can be found in the \Mac \  package 
\href{http://www.math.uni-sb.de/ag/schreyer/home/computeralgebra.htm}{MatFac15} available online.

Many of the images of the unirational families  have dimension $39$. There is one of dimension $41$, one of dimension $40$,  and  some of dimension $< 39$. A good explanation 
why I failed to prove the unirationality of $\sM_{15}$ with this method could be

\begin{conjecture} \label{mrc}
The maximal rationally connected fibration   of $\widetilde \sM_{15,16}^4$ has a three dimensional base. 
\end{conjecture}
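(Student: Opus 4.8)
The plan is to determine the base of the maximal rationally connected (MRC) fibration of $\widetilde\sM^4_{15,16}$, a variety of dimension $\dim\sM_{15}=42$, by reading off a candidate MRC quotient from the \emph{almost}-unirational construction behind Theorem~\ref{fam4}: first use it to get $\dim(\text{MRC base})\le 3$, and then — the hard direction — produce an intrinsic, non-uniruled target of dimension $3$ dominated by $\widetilde\sM^4_{15,16}$, giving the reverse inequality.

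For the upper bound I would start from the dominant rational map $\Phi\colon\mathcal P\to\widetilde\sM^4_{15,16}$ of Theorem~\ref{fam4}, whose source $\mathcal P$ is assembled from purely rational data together with one non-unirational ingredient, the choice of an effective divisor on the auxiliary curve. Passing to linear-equivalence classes exhibits $\mathcal P$, birationally, as a fibration with rationally connected fibres over a $3$-dimensional base $B$ built from the Picard data of the auxiliary curve — a family of abelian varieties (Jacobians, or Pryms), hence not uniruled. Thus $\mathcal P\to B$ is the MRC quotient of $\mathcal P$, and by the universal property of the MRC quotient the composite rational map from $\mathcal P$ to the MRC base of $\widetilde\sM^4_{15,16}$ factors through $B$; dominance of $\Phi$ then forces $\dim(\text{MRC base of }\widetilde\sM^4_{15,16})\le\dim B=3$. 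It is reassuring that the three families dominating $\sM_{15}$ all share the \emph{same} non-unirational step; part of the work is to check that they produce one and the same $3$-dimensional invariant, which is what would make $B$ intrinsic to $\widetilde\sM^4_{15,16}$ rather than an artefact of a particular parametrization.

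The hard part will be the matching lower bound: showing that $\widetilde\sM^4_{15,16}$ is \emph{not} rationally connected, and more precisely that its MRC base has dimension at least $3$. The only general levers are a non-zero global holomorphic $p$-form, which would contradict rational connectedness outright, and a dominant rational map onto a positive-dimensional variety of non-negative Kodaira dimension, whose dimension then bounds $\dim(\text{MRC base})$ from below. I would pursue the second through the abelian threefold $B$: attach to a general pair $(C,L)$ a principally polarized abelian threefold canonically defined in terms of $(C,L)$ — the Jacobian of an auxiliary genus-$3$ curve, or a Prym variety, plausibly readable off the matrix factorization of Theorem~\ref{main} — and prove that the resulting period map from $\widetilde\sM^4_{15,16}$ to $\mathcal A_3$ is dominant. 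Non-uniruledness of $\mathcal A_3$ is automatic, so dominance of the period map is the whole game: equivalently, one must show that the three parameters in the non-unirational step are \emph{essential}, that is, that the Torelli-type invariant they encode genuinely varies and cannot be absorbed by any rational reparametrization of the construction.

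This last point is precisely where our methods stop, and it is the crux of the conjecture: the matrix-factorization picture of Theorem~\ref{main} and the Boij--S\"oderberg bookkeeping used to build the twenty families only tell us that the \emph{known} constructions require a non-unirational step, not that such a step is unavoidable — ruling out the unirationality of $\widetilde\sM^4_{15,16}$ (equivalently, of $\sM_{15}$) is exactly what we have not managed to do. A secondary obstacle, needed even once the map to $B$ is in hand, is to verify that the $39$-dimensional fibres of the conjectural fibration $\widetilde\sM^4_{15,16}\to B$ are genuinely rationally connected rather than merely uniruled; for this I would splice together several of the seventeen unirational families, checking that their images (of dimensions $41$, $40$, $39$, and lower) cover each fibre and are pairwise joined by rational curves, so that each fibre is swept out by a connected chain of rationally connected subvarieties.
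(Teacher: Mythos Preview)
The statement is labelled a \emph{Conjecture} in the paper, and the paper offers no proof of it. The only support given is heuristic: seventeen unirational families were constructed, none dominates $\widetilde\sM^4_{15,16}$, and ``many of the images of the unirational families have dimension $39$''; the author writes explicitly that ``the evidence for this conjecture is rather weak.'' There is therefore no proof in the paper to compare your attempt against.

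Your write-up is not a proof either, and to your credit you say so (``This last point is precisely where our methods stop, and it is the crux of the conjecture''). What you have written is a plausible research programme, but both halves contain genuine gaps, not just the lower bound. For the upper bound you assert that the parameter space $\mathcal P$ of Theorem~\ref{fam4} is, birationally, a fibration with rationally connected fibres over a $3$-dimensional non-uniruled base $B$ ``built from the Picard data of the auxiliary curve''. Nothing in the construction singles out such a $B$: the non-unirational ingredient in Theorem~\ref{fam4} is a genus-$11$ curve $E$ together with six points (equivalently the pair $(E,\sO_E(1))$ with $h^1(\sO_E(1))=1$), and this datum is not $3$-dimensional in any evident way, nor is the complementary part of $\mathcal P$ shown to be rationally connected over it. The number $3$ in the paper does not come from an identified abelian threefold or Prym; it is read off \emph{a posteriori} from the codimension $42-39$ of the images of the non-dominant unirational families --- and since those families are not dominant, their images need not pass through a general point of $\widetilde\sM^4_{15,16}$, so even the inequality $\dim(\text{MRC base})\le 3$ does not follow from the paper's computations.

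In short: there is no proof to grade, your outline correctly identifies the shape such a proof would have to take, but the specific claim that the Theorem~\ref{fam4} construction exhibits a $3$-dimensional MRC quotient is unsupported and is itself part of what would need to be proved.
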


\section{Matrix factorizations}\label{MatFac}

Matrix factorizations were introduced 1980 by David Eisenbud in his seminal paper  \cite{E1}. We recall basic facts. Let $R$ be a regular local ring and $f \in R$ not a unit.
A \textit{matrix factorization} of $f$ is a pair $(\varphi,\psi)$ of matrices satisfying  $\psi\circ \varphi= f id$ and $\varphi\circ \psi = f id$ .Then $\varphi, \psi$ are necessarily
square matrices of the same size. If $(\varphi,\psi)$ is  a matrix factorization, then $\coker \varphi$ is a maximal Cohen-Macaulay module (MCM) on the hypersurface ring $R/f$.
Conversely, given a finitely generated maximal Cohen-Macaulay module $M$ over $R/f$, it has a short minimal free resolution
$$
0 \longleftarrow M \longleftarrow F\longleftarrow  G \longleftarrow 0
$$
as an $R$-module, and multiplication with $f$ on this complex is null homotopic
$$
\xymatrix{ 
0 & \ar[l] M \ar[d]_0&  \ar[l] F \ar[d]_f \ar[dr]^\psi& \ar[l]_\varphi G \ar[d]^f &  \ar[l] 0 \\
0 & \ar[l] M &  \ar[l] F  & \ar[l]^\varphi G &  \ar[l] 0 \\
}
$$
which yields a matrix factorization $(\varphi,\psi)$. As an $R/f$-module, $M$ has the infinite 2-periodic resolution
$$
\xymatrix{
0 &\ar[l] M & \ar[l] \overline F&  \ar[l]_{\overline \varphi}  \overline G & \ar[l]_{\overline \psi} \overline F &\ar[l]_{\overline \varphi}  \overline G & \ar[l]_{\overline \psi} \ldots \\
}
$$
where $\overline F=F \tensor R/f$ and $\overline G=G \tensor R/f$. In particular, this sequence is exact, and the dual sequence corresponding to the matrix factorization
$(\psi^t,\varphi^t)$ is exact as well.

If $N$ is an arbitrary $R/f$ module, then the minimal free resolution becomes eventually 2-periodic:
If 
$$
 0 \longleftarrow N \longleftarrow F_0\longleftarrow  F_1 \longleftarrow \ldots \longleftarrow  F_c \longleftarrow 0
$$
is the minimal free resolution of $N$ as an $R$-module, then a (not necessarily minimal) free resolution of $N$ as $R/f$-module starts
$$
 0 \leftarrow N \leftarrow \overline F_0\leftarrow  \overline F_1 \leftarrow  \overline F_2 \oplus \overline F_0\leftarrow \overline F_3\oplus \overline F_1 \leftarrow  
 \ldots \leftarrow \overline F_{ev} \leftarrow \overline F_{odd} \leftarrow  \ldots
$$
where 
$$
F_{ev} =\bigoplus_{i \equiv 0  \mod 2} F_i \quad \hbox{ and  }\quad F_{odd} =\bigoplus_{i \equiv 1 \mod 2} F_i .
$$ 
The high syzygy modules over a Cohen-Macaulay ring are MCM. In case of an hypersurface $M =\coker(\overline F_{odd} \to \overline F_{ev})$ is a MCM module.
There is  a natural surjection from $M\oplus F$ to $N$ with kernel $P$,
$$ 
0 \leftarrow N \leftarrow M\oplus F  \leftarrow P \leftarrow 0
$$
where $F$ is a free $R/f$-module and $P$ is a module of  finite projective dimension. In the examples relevant  later on, we will find that we can choose $F=0$.

Thus, an arbitrary $R/f$-module can be build from an MCM-module and a module of finite projective dimension.
In a remarkable paper \cite{AB} of Auslander and Buchweitz on Maximal Cohen-Macaulay approximation this phenomenon is studied in much wider generality.

In the case of interest for this paper, we replace $R$ by the standard graded polynomial ring $S$, the homogeneous coordinate ring of some $\PP^n$, and $f$ by a homogeneous form of degree $d$. We have to take the grading into account. A matrix factorization is a pair
$(\varphi: G \to F, \psi: F\to G(d))$ 
where $F=\bigoplus_{\ell=1}^r S(-a_\ell),G=\bigoplus_{\ell=1}^r S(-b_\ell)$. 
If $N$
is an $S/f$-module  with minimal free resolution $F_\bullet$ as an $S$-module then the free resolution as an $S/f$- module has as $i$-th term the module
$
\overline F_i \oplus \overline F_{i-2}(-d) \oplus \ldots \oplus  \overline F_0(-id/2) \hbox{ or } \overline F_i \oplus \overline F_{i-2}(-d)\oplus \ldots \oplus  \overline F_1(-(i-1)d/2)
$
in case $i$ is even or odd, respectively.

The associated sheaf $\sF=\widetilde M$ of $M=\coker(F \to G)$ is a sheaf of maximal  Cohen-Macaulay modules on the scheme $X \subset \PP^n$ defined by $f$. Thus, if $X$ is smooth then  $\sF$ is locally free, i.e., a vector bundle on $X$. In this case  $\det \varphi =\lambda f^k$ for a unit $\lambda \in K \subset S$ and $\rank \sF = k$. We frequently use the sheafified notation
$$(
 \varphi: \bigoplus_{\ell=1}^r \sO(-b_\ell) \to \bigoplus_{\ell=1}^r \sO(-a_\ell),\psi: \bigoplus_{\ell=1}^r \sO(-a_\ell) \to \bigoplus_{\ell=1}^r \sO(d-b_\ell) )
 $$
for matrix factorizations. From the short exact sequence
$$
\xymatrix{ 
0  \ar[r] & \bigoplus_{\ell=1}^r \sO(-b_\ell) \ar[r]^\varphi & \bigoplus_{\ell=1}^r \sO(-a_\ell) \ar[r] & \sF  \ar[r] & 0 \\
}$$
we obtain that $\sF$ has no middle cohomology:
$$
H^i(X,\sF(j))= 0  \hbox{ for all $i$ with } 1 \le i \le \dim X -1 \hbox{ and  all } j \in \ZZ,
$$
i.e., $\sF$ is an arithmetically Cohen-Macaulay (ACM) bundle on $X$.
Conversely, if $\sF$ is an ACM-bundle on a (smooth) hypersurface $X \subset \PP^n$ then
$$
M=H^0_*(\sF) = \sum_{j \in \ZZ} H^0(\sF(j))
$$
is a MCM-module over $S/f$ where $\langle f \rangle$ is the homogeneous ideal of $X$. The investigation of ACM bundles on hypersurfaces is a widely studied subject which fairly recently caught the attention even of physicists. 

\section{Syzygies of the general curve in $\sH$}

Recall that $\sH$ denotes the component of the Hilbert scheme of curves of degree $d=16$ and genus $g=15$ in $\PP^4$ which dominate $\sM_{15}$. 

\begin{proposition} Let $C \in \sH$ be a general point.   The homogeneous coordinate ring $S_C=S/I_C$ and the section ring $\Gamma_*(\sO_C)=\oplus_{n\in \ZZ} H^0(\sO_C(n))$ have minimal free resolutions with the following Betti tables
\begin{center}
\begin{tabular}{c|c c c c c }
             &0 & 1& 2 & 3 & 4\\ \hline
          0& 1& .& . & . & .\\
          1& . &. & . & .& .\\
          2& . & 1 & . & . &  .\\
          3& .& 15 & 30& 18 & 3\\
\end{tabular} $\qquad$ and  $\qquad$
\begin{tabular}{c|c c c c }
             &0 & 1 &  2 & 3 \\ \hline
          0& 1  &. &  . & . \\
          1& . & .&  .& . \\
          2& 3 & 16 &  15 & . \\
          3& .  & . & . & 3 \\
\end{tabular}
\end{center}
respectively. In particular $C \subset \PP^4$ lies on a unique smooth cubic threefold $X$. The minimal resolution of $\Gamma_*(\sO_C)$ as a module over the homogeneous coordinate ring 
of $X$ is eventually 2-periodic with Betti numbers
\begin{center}
\begin{tabular}{c|c c c c c c c}
             &0 & 1 &  2 & 3 &4 &  $\cdots$ \\ \hline
          0& 1  &. &  . & . & . & .\\
          1& . & .&  .& . & . &. \\
          2& 3 & 15 &  15 & . \\
          3& .  & .    &      3& 18 & 15 & . \\
          $\vdots$& .  & .    & .       &   .  &   3& 18 \\ 
\end{tabular} 
\end{center}
\end{proposition}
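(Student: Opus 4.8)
The plan is to reduce the statement to one explicit computation together with a semicontinuity argument, the only genuinely geometric input being Riemann--Roch.

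First I would exhibit one smooth curve $C_0\subset\PP^4$ of degree $16$ and genus $15$ belonging to $\sH$ --- for instance a curve produced over a finite field by the algorithm of Theorem~\ref{random curve}, or one of the curves constructed in the later sections --- and compute, by Gröbner methods, the minimal free resolutions of $S_{C_0}$ over $S$, of $\Gamma_*(\sO_{C_0})$ over $S$, and of $\Gamma_*(\sO_{C_0})$ over the homogeneous coordinate ring $S_{X_0}=S/f_0$ of the cubic it lies on. One checks that these resolutions have exactly the three Betti tables of the Proposition, the last one being eventually $2$-periodic; in particular $\beta_{1,2}(S_{C_0})=0$ and $\beta_{1,3}(S_{C_0})=1$, so $C_0$ lies on no quadric and on a unique cubic $X_0=V(f_0)$, and one verifies directly that $X_0$ is smooth.

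Next comes the numerical input. Since $C$ carries a $g^4_{16}$ and spans $\PP^4$, $h^0(\sO_C(n))=1,5$ for $n=0,1$, while $\deg\sO_C(n)=16n\ge 2g-1$ for $n\ge2$ gives $h^0(\sO_C(n))=16n-14$; dually $h^1(\sO_C(1))=h^0(\omega_C\otimes L^{-1})=3$ and $h^1(\sO_C(n))=0$ for $n\ge2$. Thus the Hilbert function of $\Gamma_*(\sO_C)$ is the same at every point of $\sH$; the Hilbert function of $S_C$ for general $C$ is the semicontinuity-minimal one, which by the computation on $C_0$ is the stated one (so $\dim(S_C)_2=15$ and $\dim(S_C)_3=34$, i.e.\ $C$ lies on no quadric and on exactly one cubic); and the Hilbert function of $S_X=S/f$ is that of any cubic hypersurface. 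Summing Hilbert series of the free modules degree by degree shows that, for each of the three resolutions, the alternating sum $c_j=\sum_i(-1)^i\beta_{i,j}$ in every internal degree $j$ depends only on these Hilbert functions; over $S_X$ the relevant identity is $H_{\Gamma_*(\sO_C)}(t)=H_{S_X}(t)\sum_{i,j}(-1)^i\beta^{S_X}_{i,j}t^j$, which makes sense as a power series because for each fixed $j$ only finitely many $\beta^{S_X}_{i,j}$ are non-zero. Hence all the $c_j$ are constant on $\sH$. Now Betti numbers are upper semicontinuous in the flat family over a dense open $\sH^{\circ}\subseteq\sH$ on which everything is smooth and the Hilbert function is minimal (there $\pi_*\sO_{\mathcal C}(n)$ is locally free and $X=V(f)$ varies in a flat family of cubic threefolds since $\dim(I_{\mathcal C_t})_3$ is constant), so $\beta_{i,j}(\text{general})\le\beta_{i,j}(C_0)$ term by term in all three resolutions. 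Each of the three claimed tables has at most one non-zero $\beta_{i,j}$ in every internal degree $j$; if $\beta_{i_0,j}=b$ is that entry, then $\beta_{i,j}(\text{general})=0$ for $i\ne i_0$ while $(-1)^{i_0}\beta_{i_0,j}(\text{general})=c_j=(-1)^{i_0}b$, so $\beta_{i_0,j}(\text{general})=b$, and the general Betti numbers equal those of $C_0$. For the eventually $2$-periodic resolution one may also see the shape directly: $f$ annihilates $\Gamma_*(\sO_C)$, so it is an $S_X$-module, and feeding the length-$3$ minimal $S$-resolution $G_\bullet$ of $\Gamma_*(\sO_C)$ into the reduction procedure of Section~\ref{MatFac} gives a $2$-periodic $S_X$-resolution built on $\overline G_{\mathrm{ev}}=\overline G_0\oplus\overline G_2$ and $\overline G_{\mathrm{odd}}=\overline G_1\oplus\overline G_3$, both of rank $1+3+15=16+3=19$; one degree-$0$ component of the null-homotopy of multiplication by $f$ (the map $\overline G_0(-3)\to\overline G_1$ and its periodic translates) is generically of rank $1$, trimming each period to rank $18$ and producing precisely the listed twists.

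The main obstacle is the honest execution of the computational step: certifying that the explicit $C_0$ is smooth, that it lies in the component $\sH$ --- rather than in another component of $\Hilb_{16t-14}(\PP^4)$ --- and that its Betti tables are exactly as claimed. The membership $C_0\in\sH$ is where the real work hides; it rests on the uniqueness of $\sH$, that is, on the irreducibility of the Severi variety of plane curves of degree $12$, geometric genus $15$ and $40$ nodes recalled in the introduction, together with a check that $C_0$ is Brill--Noether general. A subsidiary point, if $C_0$ is only available over a finite field, is the passage to characteristic zero, handled by the standard specialization inequality for Betti numbers. Everything else is bookkeeping with the Euler characteristics $c_j$ and semicontinuity.
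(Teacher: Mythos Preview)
Your proposal is correct and follows essentially the same line as the paper's proof: both reduce everything to producing one explicit example $C_0\in\sH$ (constructed in Section~\ref{constructions} over a finite field, then lifted by semicontinuity) and combine Riemann--Roch with Hilbert-numerator bookkeeping to pin down the Betti tables; the $S_X$-resolution is obtained in both cases by feeding the length-$3$ $S$-resolution into the procedure of Section~\ref{MatFac} and then trimming the rank-$19$ periodic part down to $18$.

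The one point where you genuinely diverge is the entry $\beta^S_{2,5}(\Gamma_*(\sO_C))$. You deduce its vanishing purely from semicontinuity --- it vanishes for $C_0$, and since every internal degree in the target table carries at most one nonzero entry, the fixed alternating sums $c_j$ force equality. The paper instead gives a conceptual argument: the dual complex $\Hom(F_\bullet,S(-5))$ resolves $H^0_*(\omega_C)$, so $\beta_{2,5}\ne 0$ would mean a linear relation among the three generators of $H^0(\omega_C(-1))$, and by Brill--Noether theory this happens precisely on the ramification locus of $\widetilde\sM^4_{15,16}\to\sM_{15}$. Your route is cleaner for the proposition as stated; the paper's route identifies exactly where the generic shape fails, and that information is reused later when ``good'' matrix factorizations are defined. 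Your flag that membership $C_0\in\sH$ is the honest obstacle is well placed and matches what the paper leaves implicit.
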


\begin{proof} Assuming that the maps $H^0(\sO_{\PP^4}(n)) \to H^0(\sO_C(n))$ are of maximal rank for all $n$, i.e., $C$ has maximal rank, we find, using Riemann-Roch and the fact that
$\sO_C(n)$ is non-special for $n \ge 2$, that
\begin{itemize}
\item the Hilbert series of the homogeneous coordinate ring of $C$ is 
$$
H_C(t) =1 +5t+15t^2+34t^3+(34+16)t^4+(34+2\cdot 16)t^5 + \ldots,
$$
\item the Hartshorne-Rao module
$$H_*^1(\sI_C) = \sum_{n \in \ZZ}H^1(\sI_C(n)) \cong K^3(-2)$$
is a three dimensional vector space concentrated in degree $2$, 
\item the ideal sheaf $\sI_C$ is 4-regular, and
\item the homogeneous ideal $I_C = H^0_* ( \sI_C)$
has a single generator in degree 3 and 15 further generators in degree 4.
\end{itemize}
Hence, the Hilbert numerator has shape
$$
(1-t)^5H_C(t) = 1-t^3-15t^4+30t^5-18t^6+3t^7 ,
$$
and smooth maximal rank curves in $\sH$ have a Betti table as  claimed in the Proposition.  To establish that a general point in $C \in \sH$ is a maximal rank curve, it suffices to produce a single maximal rank example. We will explicitely construct such  examples in Section \ref{constructions}.  Moreover, by inspection we find that the general $C$ lies on a smooth cubic hypersurface $X$.

The Betti table of the resolution $F_\bullet$ of the section ring as an $S$-module can be deduced with the same method,
since $H_{\Gamma_*(\sO_C)}(t)=H_C(t)+3t^2$. The only questionable entry of the Betti table is $\beta_{ 2,5}^S(\Gamma_*(\sO_C))=0$ for which we argue as follows. The complex 
$\Hom(F_\bullet,S(-5))$ resolves $H^0_*( \sE xt^3(\sO_C,\omega_{\PP^4})) \cong H^0_*(\omega_C)$. By Brill-Noether theory,  there are no linear relations among the three generators in $H^0(\omega_C(-1))$ iff $(C,\sO_C(1))$ does not correspond to a ramification point of the map $\widetilde \sM_{15,16}^4 \to \sM_{15}$. So $\beta_{ 2,5}^S(\Gamma_*(\sO_C))=0$
holds for $C \in \sH$ outside the ramification divisor. 
 Finally,  we compute  the Betti number of  $\Gamma_*(\sO_C)$ as an $S_X$-module. The (possibly  non-minimal) resolution from Section \ref{MatFac} has the Betti table
\begin{center}
\begin{tabular}{c|c c c c c c c}
             &0 & 1 &  2 & 3 &4 &  $\cdots$ \\ \hline
          0& 1  &. &  . & . & . & .\\
          1& .   & .   &   1& . & . &. \\
          2& 3 & 16 &  15 & . & 1& .\\
          3& .  & .    &      3& 19 & 15 & . \\
          $\vdots$& .  & .    & .       &   .  &   3& 19 \\ 
\end{tabular} 
\end{center}
So this resolution is non-minimal. The minimal version has the desired Betti table.
\end{proof}

Consider the matrix factorization 
$$
(\varphi\colon\sO^{15}(-4)\oplus \sO^3(-5) \to \sO^{18}(-3), \psi\colon \sO^{18}(-3)\to \sO^{15}(-1)\oplus \sO^3(-2))
$$ 
corresponding (up to twist) to the 2-periodic part
of the resolution of  $\Gamma_*(\sO_C)$ as $S_X$-module. Let $\sF=\coker \varphi$.  The sheaf $\sF$ is a vector bundle of 
$\rank \sF =7$ on $X$, since $\deg \det \varphi= 15+2\cdot 3=3\cdot 7$. We have a short exact sequence
$$
\xymatrix{
0 & \ar[l] \coker \psi & \ar[l] \sO_X^{15}(-1)\oplus \sO_X^3(-2) & \ar[l] \sF &\ar[l] 0 \\
}.
$$
The composition $\sO_X^3(-2) \lto \sF \lto \sO_X^{18}(-3)$ is surjective with a summand $\sO_X^3(-3) $ in the kernel. Indeed, the composition has as a component the sheafified presentation matrix $S^3(-2) \lto S^{15}(-3)$ of the Hartshorne-Rao module  of $C$ restricted to $X$ as a summand, and surjectivity follows because 
$$\coker( S^{15}(-3) \to S^3(-2)) \cong K^3(-2)$$
is a module of finite length. Thus, we obtain a complex
$$
\xymatrix{
0  \ar[r] & \sO_X^3(-3)  \ar[r]^{\quad \beta} &  \sF \ar[r]^{\alpha\quad} &\ar[r]  \sO_X^3(-2) & 0 \\
}.
$$

\begin{theorem}\label{monad} Let $C \in \sH$ be a general point. Then the complex
$$
\xymatrix{
0  \ar[r] & \sO_X^3(-3)  \ar[r]^{\quad \beta} &  \sF \ar[r]^{\alpha\quad} &\ar[r]  \sO_X^3(-2) & 0 \\
}.
$$
obtained from the resolution of $\Gamma_*(\sO_C)$ as an $S_X$-module is a monad for the ideal sheaf $\sI_{C/X}$ of $C$ in $X$, i.e.,
$\alpha$ is surjective, $\beta$ is  injective and $\ker \alpha  /  \image \beta \cong \sI_{C/X}$.
\end{theorem}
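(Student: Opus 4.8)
The plan is to verify the three properties that make the displayed complex a monad for $\mathcal{I}_{C/X}$: surjectivity of $\alpha$, injectivity of $\beta$, and $\ker\alpha/\image\beta\cong\mathcal{I}_{C/X}$. Surjectivity of $\alpha$ is already in hand from the paragraph preceding the statement: $\alpha$ factors the composite $\mathcal{O}_X^{18}(-3)\xrightarrow{q}\mathcal{F}\to\mathcal{O}_X^{15}(-1)\oplus\mathcal{O}_X^3(-2)\to\mathcal{O}_X^3(-2)$ through the canonical surjection $q\colon\mathcal{O}_X^{18}(-3)\to\mathcal{F}=\coker\varphi$, and that composite is, in a suitable basis, the sheafified Hartshorne--Rao presentation $S^{15}(-3)\to S^3(-2)$ bordered by a zero block, hence onto because $\coker=K^3(-2)$ has finite length. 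As $X$ is smooth, $\mathcal{F}$ and $\ker\alpha$ are locally free, and the monad splits into
$$0\to\ker\alpha\to\mathcal{F}\xrightarrow{\ \alpha\ }\mathcal{O}_X^3(-2)\to0,\qquad 0\to\image\beta\to\ker\alpha\to\mathcal{E}\to0,\quad \mathcal{E}:=\ker\alpha/\image\beta .$$

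Injectivity of $\beta$ amounts to the three sections of $\mathcal{F}(3)$ that define $\beta$ being generically linearly independent; this is a closed condition, and I would verify it on an explicit general $C\in\sH$. Granting it, $\image\beta\cong\mathcal{O}_X^3(-3)$ and $\mathcal{E}$ is torsion-free of rank $1$ by the two sequences. Its Chern character is then determined: using $\mathcal{F}=\mathcal{Z}_2(3)$, with $\mathcal{Z}_2$ the second syzygy sheaf of $\mathcal{O}_C$ in the sheafified $S_X$-resolution $0\to\mathcal{Z}_2\to\mathcal{O}_X^{15}(-4)\oplus\mathcal{O}_X^3(-5)\to\mathcal{O}_X^{15}(-3)\to\mathcal{O}_X\oplus\mathcal{O}_X^3(-2)\to\mathcal{O}_C\to0$, a direct computation of Chern characters along that sequence and then along the monad gives $\operatorname{ch}\mathcal{E}=\operatorname{ch}\mathcal{F}-3\operatorname{ch}\mathcal{O}_X(-3)-3\operatorname{ch}\mathcal{O}_X(-2)=\operatorname{ch}\mathcal{O}_X-\operatorname{ch}\mathcal{O}_C=\operatorname{ch}\mathcal{I}_{C/X}$. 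In particular $c_1(\mathcal{E})=0$, so $\mathcal{E}^{\vee\vee}=\mathcal{O}_X$ (since $\Pic X=\ZZ$) and $\mathcal{E}=\mathcal{I}_{Y/X}$ for a closed subscheme $Y\subset X$ with $[Y]=[C]$, $\deg Y=16$ and $p_a(Y)=15$.

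The crux is then to see that $Y=C$ on the nose, which the Chern-class bookkeeping alone does not give. A structural route would exploit that $\mathcal{F}$ was cut out of $\Gamma_*(\mathcal{O}_C)$: the inclusion $\mathcal{F}(-3)=\mathcal{Z}_2\hookrightarrow\mathcal{O}_X^{15}(-4)\oplus\mathcal{O}_X^3(-5)$, the resolution differentials down to $\mathcal{O}_X\oplus\mathcal{O}_X^3(-2)$, and the projections onto the $\mathcal{O}_X$ summand and onto the two Hartshorne--Rao summands should assemble into a comparison between the monad and the presentation $\mathcal{O}_X^{15}(-4)\to\mathcal{O}_X\to\mathcal{O}_C\to0$, inducing a nonzero morphism $\mathcal{E}\to\mathcal{O}_C$, i.e. a nonzero $\mathcal{I}_{C/X}\hookrightarrow\mathcal{E}$; this is forced to be an isomorphism because both sheaves are torsion-free of rank $1$ with equal Chern character. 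The obstacle is making this comparison precise — the numerics only show $\mathcal{E}$ is the ideal sheaf of \emph{some} genus-$15$, degree-$16$ curve rationally equivalent to $C$, and pinning $Y$ to $C$ itself is what uses the provenance of $\mathcal{F}$.

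In practice, and in keeping with the computational backbone of the paper, I would settle this — together with the injectivity of $\beta$ and the surjectivity of $\alpha$ for a \emph{general} member of $\sH$ — by constructing one explicit general $C\in\sH$ over a finite field as in Section \ref{constructions}, checking on the computer that the middle cohomology of the associated monad equals $\mathcal{I}_{C/X}$, and then concluding by semicontinuity: once $\alpha$ and $\beta$ are realized in a family over $\sH$, each of the three conditions is open. So the essential difficulty is concentrated in the identification $Y=C$; the remaining steps are either formal or a finite computation.
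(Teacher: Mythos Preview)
Your computational fallback has a genuine gap. The condition ``the middle cohomology $\mathcal{E}_C$ of the monad equals $\mathcal{I}_{C/X}$'' is \emph{not} an open condition on $C\in\sH$. What you have, after your Chern-class bookkeeping, is an algebraic assignment $C\mapsto Y_C$ from (an open subset of) $\sH$ to the Hilbert scheme, and the locus $\{C:Y_C=C\}$ is the preimage of the diagonal, hence \emph{closed}. Verifying $Y_{C_0}=C_0$ at one point therefore tells you nothing about the general member. Openness would follow if you first constructed a natural nonzero morphism $\mathcal{E}_C\to\mathcal{O}_X/\mathcal{I}_{C/X}$ in the family (then ``this map is an isomorphism of $\mathcal{E}_C$ onto its image $\mathcal{I}_{C/X}$'' is open), but that is exactly the structural comparison you say you cannot make precise. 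So the proof as written does not close.

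The paper settles this point without any computation, by dualizing. Set $\mathcal{G}=\ker\alpha$, a rank $4$ bundle, and look at $\beta^*\colon\mathcal{G}^*\to\mathcal{O}_X^3(3)$. The composite $\mathcal{O}_X^{15}(1)\to\mathcal{F}^*\to\mathcal{G}^*\xrightarrow{\beta^*}\mathcal{O}_X^3(3)$ is, up to twist, the restriction to $X$ of $\mathcal{H}om(\mathcal{O}^3(-6)\to\mathcal{O}^{15}(-4),\omega_{\PP^4})$, the dual of the \emph{last} map in the $S$-resolution of $\Gamma_*(\mathcal{O}_C)$. But the dualized complex $\Hom(F_\bullet,S(-5))$ resolves $\Gamma_*(\omega_C)$, so $\coker(\beta^*(-2))\cong\omega_C$, supported on $C$ in codimension $2$. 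This already gives injectivity of $\beta$ (the dual drops rank only in codimension $2$), and since $\omega_X\cong\mathcal{O}_X(-2)$ one reads off $\ker\beta^*\cong\mathcal{O}_X$ and $\coker(\beta')\cong\mathcal{I}_{C/X}$. The identification $Y=C$ thus comes for free from recognizing $\beta^*$ inside the resolution you started with---precisely the ``provenance of $\mathcal{F}$'' you invoked but did not exploit.
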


\begin{proof} We already proved the surjectivity of $\alpha$. Thus $\sG = \ker \alpha$ is a rank 4 subbundle of $\sF$, and $\beta$ induces a homomorphism
$\beta'\colon \sO_X(-3)^3\to  \sG$ between locally free sheaves on $X$. We expect that $\beta'$ drops rank along a codimension 2 subscheme of $X$. This is the case if
$\coker(\beta^*\colon \sG^* \to \sO_X(3)) $ has support in codimension $2$ on $X$. The composition $\sO_X^{15}(1) \to  \sF^* \to \sG^* \to \sO_X^3(3)$ coincides with
the restriction to $X$ of  $\sH om_{\sO_{\PP^4}}(\sO^{3}(-6) \to \sO^{15}(-4), \omega_{\PP^4})$ up to twist, where 
$\sO^{15}(-4) \lto \sO^3(-6)$ denotes the sheafified last map in the resolution of $\Gamma_*(\sO_C)$ as an $S$-module.  Hence
  $\coker(\beta^*(-2):\sG^*(-2) \to \sO_X^3(1)) \cong \omega_C $ has support on $C$, which has codimension $2$ on $X$. Finally, $\omega_X \cong \sO_X(-2)$ implies 
  $\ker(\beta^*\colon \sG^* \to \sO_X^3(3)) \cong \sO_X$ and $ \ker \alpha / \image \beta \cong \coker(\beta') \cong \sI_{C/X} \subset \sO_X$.
\end{proof}

\noindent
{\it Proof} of Theorem \ref{main}. One direction follows from Theorem \ref{monad}. For the other direction, consider an arbitrary matrix factorization of
type 
$$
( \psi\colon \sO^{18}(-3)\to \sO^{15}(-1)\oplus \sO^3(-2), \varphi\colon\sO^{15}(-1)\oplus \sO^3(-2) \to \sO^{18} )
$$
of some cubic form as in Theorem \ref{main}. The pair $(\varphi(-3),\psi)$ is a matrix factorization of the shape used to derive the monad of Theorem \ref{monad}. In particular, we have again a short exact sequence
$$
 0 \to \sF \to \sO_X^{15}(-1)\oplus \sO_X^3(-2) \to \coker \psi \to 0
$$
with $\sF= \coker \varphi(-3)$. The composition $\sO_X^{18}(-3) \to \sF \to \sO_X^3(-2)$ has a summand $\sO_X^3(-3)$ in the kernel simply because
there are only five linearly independent linear forms on $\PP^4$. Thus we can derive a complex 
$$
0 \to \sO_X^3(-3) \to \sF \to \sO_X^3(-2) \to 0
$$
again.  It is an open condition on the matrices $\psi$ that the summand $\sO_X^3(-3)$ in the kernel is uniquely determined, and that $\sF \to \sO_X^3(-2)$ is surjective.
Further open conditions on matrix factorizations are the conditions that the complex above is a monad,  that it homology is the ideal sheaf $\sI_{C/X}$ of a smooth curve of degree $d=16$ and genus $g=15$ on a smooth cubic threefold $X$, that $C \subset \PP^4$ is a maximal rank curve, and that the pair $(C, \sO_C(1))$
does not lie in the ramification divisor of $\widetilde \sM_{15,16}^4 \to \sM_{15}$. Thus the Theorem follows if we establish the existence of such matrix factorizations.  We will prove the existence computationally in  Section \ref{constructions} with Theorem \ref{fam1}. \qed

\section{Betti Tables}\label{Betti Tables}

One approach to the desired matrix factorizations is via the study of the moduli space $\sM_X(7,c_1(\sF),c_2(\sF),c_3(\sF))$ of vector bundles on a general cubic 3-fold $X$. We choose a different more direct approach. 

Consider an $S$-modules $N$, annihilated by the equation of a cubic hypersurface $X$,  such that the 2-peroidic part of the minimal free the $S_X$-resolution gives the desired matrix factorization, or its transpose.
If we require in addition, that $S_X$-resolution derived from the minimal free $S$-resolution as in Section \ref{MatFac} is minimal right away, then up to twist there are only finitely many Betti tables possible. 

\begin{proposition} \label{tables}The Boij-S\"oderberg cone of $S$-Betti tables contains up to twist precisely 39 different integral tables $\beta^S(N)$ of projective dimension $\pd \beta^S(N) \le 4$ and $\codim \beta^S(N) \ge 3$, such that the induced possibly non-minmal $S_X$-resolution from Section \ref{MatFac} is minimal, and its 2-periodic part corresponds to  a matrix factorization of desired shape.  All of these tables satisfy  $\codim \beta^S(N) = 3$.
\end{proposition}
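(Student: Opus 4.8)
The plan is to make the finiteness a bookkeeping statement and then carry out the bookkeeping. First I would fix normalizations: by Section~\ref{MatFac}, if $N$ is an $S_X$-module with minimal $S$-resolution $F_\bullet$ of length $\le 4$, then the induced $S_X$-resolution has $i$-th term $\overline F_i\oplus\overline F_{i-2}(-3)\oplus\overline F_{i-4}(-6)\oplus\cdots$, and its eventual $2$-periodic tail is governed by $\overline F_{ev}$ and $\overline F_{odd}$. For this tail to be the prescribed matrix factorization $(\psi\colon\sO^{18}(-3)\to\sO^{15}(-1)\oplus\sO^3(-2),\ \varphi)$ up to twist (or its transpose), the multiset of twists in $F_{ev}$ and $F_{odd}$ must assemble, after the shifts $(-3),(-6),\dots$, into exactly $\{(-1)^{15},(-2)^3\}$ and $\{(-3)^{18}\}$ (or the transposed pair). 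That already pins down $\beta^S(N)$ up to finitely many choices of how the $18{=}15{+}3$ and $15{+}3$ ranks distribute among $F_0,\dots,F_4$; the requirement that the induced $S_X$-resolution be \emph{minimal from the start} is the extra constraint that forbids any cancellation between the shifted copies, i.e.\ no twist of $F_{i}$ may coincide with a twist of $F_{i-2}(-3)$ after sheafification. This reduces the problem to: list all Betti tables $\beta^S(N)$ with $\pd\le 4$, $\codim\ge 3$, compatible with these numerical constraints.

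Next I would impose the Boij--S\"oderberg conditions. A table in the BS cone of projective dimension $\le 4$ is a nonnegative rational combination of pure diagrams $\pi(d_0<d_1<\dots<d_p)$ with $p\le 4$, and \emph{integrality} of $\beta^S(N)$ together with the known Herzog--K\"uhl equations constrains the degree sequences severely once the generator/relation degrees are bounded (which they are, by the previous paragraph, since all twists appearing in $F_\bullet$ lie in a bounded window determined by $\{(-1),(-2),(-3)\}$ and the shifts). Concretely I would: (i) enumerate the finitely many degree windows for each $F_i$; (ii) for each, solve the linear system coming from ``$(1-t)^5 H_N(t)=$ Hilbert numerator'' together with $f\cdot N=0$ (which forces $H_N(t)\cdot(1-t^3)\in\ZZ[t]$, i.e.\ the alternating sum of Betti numbers is divisible by the cubic factor $(1-t^3)$ in the right way so that the $S_X$-resolution is finite-to-periodic with the stated ranks); (iii) discard tables violating $\codim\ge 3$ or minimality of the induced resolution. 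This is a finite search, and the \Mac\ package \cite{ESS} for Boij--S\"oderberg decompositions is exactly the tool to run it.

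Finally, the count $39$ and the uniform statement $\codim\beta^S(N)=3$ I would verify as the output of that search, but I would also give the a priori reason for ``$=3$ always'': the matrix factorization has $\rank\sF=7$ (since $\deg\det\varphi=15+2\cdot3=21=3\cdot7$), and the target module $M=\coker$ of the periodic part is a rank-$7$ MCM module on the $3$-fold $X$, supported on all of $X$; since $\dim X=3$, $N$ itself can drop dimension by at most... more precisely, $N=M\oplus F$ modulo a finite-projective-dimension module $P$ with $F=0$ in all our cases, and such $P$ lives in codimension exactly $3$ on $\PP^4$ because its annihilator contains $f$ but $P$ has finite length as... — here one shows $P$ is Cohen--Macaulay of codimension $3$, forcing $\codim\beta^S(N)=\codim P=3$. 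I expect the \textbf{main obstacle} to be exactly this last point made rigorous rather than observed: controlling that the ``non-periodic part'' $P$ is forced to have codimension $3$ (not $4$, i.e.\ not finite length, and not $\le 2$), which is what makes the list both finite and uniform. The enumeration itself, once the window and integrality constraints are in place, is mechanical and best certified by the accompanying computation.
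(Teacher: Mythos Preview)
Your overall plan---reduce to finitely many candidate Betti tables by reading off the degree constraints from the periodic tail, then filter by Boij--S\"oderberg membership and codimension via computer---is exactly what the paper does. Two points deserve comment.

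First, you describe the shape constraint rather abstractly; the paper is more concrete and this makes the finiteness immediate. The requirement that the induced $S_X$-resolution be minimal and have the prescribed $2$-periodic tail forces $\beta^S(N)$ into one of two explicit templates (entries $a,b,c,\ldots$ with mandatory zeros in specified slots), subject to linear constraints $(a{+}d{+}h,\,b{+}e{+}i,\,c{+}f)\in\{(3,15,18),(15,3,18)\}$ in the first template and $(a{+}d{+}h,\,b{+}e,\,c{+}f)\in\{(18,15,3),(18,3,15)\}$ in the second. Nonnegativity of the entries then gives a small finite list to feed to the Boij--S\"oderberg test. Your Hilbert-series and pure-diagram machinery is not needed for finiteness.

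Second, your attempted a priori argument for $\codim\beta^S(N)=3$ via the MCM approximation is incorrect. In the exact sequence $0\to P\to M\to N\to 0$, the module $M$ is maximal Cohen--Macaulay on $X$ and hence supported on all of $X$, i.e.\ $\codim_S M=1$. If $\codim N\ge 3$ then $\dim N<\dim M$, so $P$ and $M$ have the same support and $\codim P=1$, not $3$. The claim that $P$ is Cohen--Macaulay of codimension $3$ is simply false in this setup, and there is no structural shortcut here. The paper makes no such argument: it just inspects the $39$ surviving tables and observes that each has $\codim=3$. You should do the same and drop the heuristic about $P$.
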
 

\begin{proof} The possible  shape of the Betti table $\beta^S(N)$ is of the form
\begin{center}
\begin{tabular}{c|ccccc} 
  & 0 & 1 & 2 & 3 & 4 \\ \hline
0 & $a$ & . & . & . & . \\
1 & $b$ & $c$ & $d$ & . \\
2 & . & . & $e$ & $f$  & $h$ \\
3 & . & . & .  & . & $i$ \\
\end{tabular}
\quad or \quad
\begin{tabular}{c|ccccc} 
  & 0 & 1 & 2 & 3 & 4 \\ \hline
0 & $a$ & $b$ & . & . & . \\
1 & . & $c$ & $d$ & $e$ \\
2 & . & . & . & $f$  & $h$ \\
\end{tabular}
\end{center}
with $(a+d+h, b+e+i, c+f)=(3,15,18)$ or $(15,3,18)$ for the first shape, and 
$(a+d+h, b+e, c+f)=(18,15,3)$ or $(18,3,15)$ for the second shape.  Since all entries are nonnegative there are only finitely many tables to start with, and as a computation shows, $39$ of the tables lie in the Boij-S\"oderberg cone and satisfy $\codim \beta^S(N) \ge 3$. The last assertion follows by inspection of 
this list which we produced with \Mac \   using our package \href{http://www.math.uni-sb.de/ag/schreyer/home/computeralgebra.htm}{MatFac15}.\end{proof}

\begin{remark} The Picard group of a non-singular cubic is generated by the hyperplane class. This motivates $\codim \beta^S(N) \ge 3$ since otherwise
we have to guarantee that the class of the codimension 2 part of the support of $N$ is a multiple of the hyperplane class. The condition $\pd_S \beta^S(N) \le 4$ is motivated by the wish to  think of $N$ as a submodule of the global section module $\Gamma_*(\sL)$ of some auxiliary sheaf $\sL=\widetilde N$.
\end{remark}

\begin{example}\label{dE=11} The Betti tables of Proposition \ref{tables} with $\deg \beta^S(N) =11$ are the following:
\begin{center}
\begin{tabular}{c|ccccc} 
  & 0 & 1 & 2 & 3 \\ \hline
0 & 5 & 9 & . & .  \\
1 & . & 3 & 13 &6 \\
\end{tabular}
\quad and its dual \quad
\begin{tabular}{c|ccccc} 
  & 0 & 1 & 2 & 3  \\ \hline
1 & 6& 13 & 3 & . \\
2 & . & . &9 & 5  \\
\end{tabular} \; ,
\end{center}
\begin{center}
\begin{tabular}{c|ccccc} 
  & 0 & 1 & 2 & 3 & 4 \\ \hline
0 & 1 & . & . & . & . \\
1 & 3& 10 & 1 & . &.\\
2 & . & . & 12 & 8  & 1\\
\end{tabular}
\quad and \quad
\begin{tabular}{c|ccccc} 
  & 0 & 1 & 2 & 3 & 4 \\ \hline
0 & 6 & 12 & . & . & . \\
1 & . & . & 11 &3 & . \\
2 & . & . & . & 3  & 1 \\
\end{tabular}
\end{center}
\end{example}

\begin{example} The Betti tables of Proposition \ref{tables} with $\deg \beta^S(N) =13$ are the following:
\begin{center}
\begin{tabular}{c|ccccc} 
  & 0 & 1 & 2 & 3 \\ \hline
0 & 2 & . & . & .  \\
1 & 2& 15 & 13 & . \\
2 & . & . & 1 & 3  \\
\end{tabular}
\quad and its dual \quad
\begin{tabular}{c|ccccc} 
  & 0 & 1 & 2 & 3  \\ \hline
0 & 3 & 1 & . & .  \\
1 & . & 13& 15 &2  \\
2 & . & . & . & 2  \\
\end{tabular} \ .
\end{center}

\end{example}

\begin{example} \label{ACM} The Betti tables of Proposition \ref{tables} with $\pd \beta^S(N)=3$, i.e., ACM-tables, are the four ACM-tables above, the tables
\begin{center}
\begin{tabular}{c|ccccc} 
  & 0 & 1 & 2 & 3 \\ \hline
0 & 1 & . & . & .  \\
1 & 4& 12 & 2 & . \\
2 & . & . & 11& 6  \\
\end{tabular}
\qquad and \qquad
\begin{tabular}{c|ccccc} 
  & 0 & 1 & 2 & 3  \\ \hline
0 & 4 & 3 & . & .  \\
1 & . & 12& 14 & . \\
2 & . & . & . & 3  \\
\end{tabular} \, ,
\end{center}
\begin{center}
\begin{tabular}{c|ccccc} 
  & 0 & 1 & 2 & 3 \\ \hline
0 & 2 & . & . & .  \\
1 & 2& 11 & 1 & . \\
2 & . & . & 13 & 7 \\
\end{tabular}
\qquad and \qquad
\begin{tabular}{c|ccccc} 
  & 0 & 1 & 2 & 3  \\ \hline
0 & 3 & . & . & .  \\
1 & . & 10& . & .  \\
2 & . & . & 15& 8  \\
\end{tabular} 
\end{center}
of degree $\deg \beta^S(N)= 14,16,17$ and $20$ respectively, and their duals.
\end{example}

In Section \ref{constructions} we will construct unirational families of pairs $(N,X)$ of a module and a cubic 3-fold $X$, whose equation annihilates $N$, for many of the Betti tables above. Naturally we seek
for families which depend on at least $42$ parameters modulo projectivities. Our approach is the following:
The module $N$ will sheafify to a line bundle $\sL$ on an auxiliary (smooth and irreducible) curve $E$ of degree $d_E= \deg \beta^S(N)$. The geometric genus $g_E$ and the degree $\deg \sL$ of the line bundle
are not determined by $\beta^S(N)$. However, $h^0(\sL)$ and $h^1(\sL)$ are determined by   $\beta^S(N)$,  at least if we make some plausible assumptions  on the local cohomology module $$H^1_\gm(N) \cong \Gamma_*(\sL)/ N. $$ Here $\gm \subset S$ denotes the homogeneous maximal ideal. 
It is natural to assume $h^0(\sO_E(1))=5$. However, the speciality $h^1(\sO_E(1))$ is another undetermined quantity.
Our choice of $g_E, h^1(\sO_E(1))$ and $\deg \sL$ is motivated by a dimension count. We will construct generically reduced families of dimension $\ge 42$ for the following 
Betti tables.
$$
\begin{tabular}{ | l | l |}
\hline
$d_E=11$ & $d_E=14$ \cr \hline
\begin{tabular}{c|ccccc} 
  & 0 & 1 & 2 & 3 \\ \hline
0 & 5 & 9 & . & .  \\
1 & . & 3 & 13 &6 \\
\end{tabular}
& %\quad  \hbox{ and }\qquad &
\begin{tabular}{c|ccccc} 
  & 0 & 1 & 2 & 3 & 4\\ \hline
0 & 2 & . & . & .  & .\\
1 & 1& 9 & . & . & .\\
2 & . & . & 14& 9 & 1  \\
\end{tabular}\cr \hline

\begin{tabular}{c|ccccc} 
  & 0 & 1 & 2 & 3 & 4 \\ \hline
0 & 6 & 12 & . & . & . \\
1 & . & . & 11 &3 & . \\
2 & . & . & . & 3  & 1 \\
\end{tabular}
&% \quad \hbox{ and }  \qquad &
\begin{tabular}{c|ccccccc} 
  & 0 & 1 & 2 & 3  \\ \hline
0 & 6 & 11 & . & .  \\
1 & . &  2 & 12 &  4 \\
2 & . & .  &  .  & 1 \\
\end{tabular} \cr \hline

\begin{tabular}{c|ccccc} 
  & 0 & 1 & 2 & 3 & 4 \\ \hline
0 & 1 & . & . & . & . \\
1 & 3& 10 & 1 & . &.\\
2 & . & . & 12 & 8  & 1\\
\end{tabular} 

& %\quad \hbox{ and } \quad &
\begin{tabular}{c|ccccccc} 
  & 0 & 1 & 2 & 3  \\ \hline
0 & 7 & 15 & 4 & .  \\
1 & . &  . & 8 &  3 \\
2 & . & .  &  .  & 1 \\
\end{tabular}  \cr \hline
\end{tabular}
$$
The  tables in the first column are realized by modules $N$ such that the line bundle $\sL=\widetilde N$ has support  on a curve $E$ residual to a line in the cubic threefold. The tables
in the second column are realized  by modules on curves $E$ of degree $14$. The last table is an example not covered by Propostion \ref{tables}.

\section{Constructions}\label{constructions}

Let us call a matrix factorization $(\phi,\psi)$ of $18\times(15+3)$ and $(15+3)\times18$ matrices on a cubic threefold $X$ \emph{good}, if the complex as in Proposition \ref{monad} is a monad
of an ideal sheaf of a \emph{smooth} curve $C$ of degree $d_C=16$ and genus $g_C=15$ such that $\sO_C(1) \in W^4_{16}(C)$ is a \emph{smooth isolated point}, i.e., 
$(C,\sO_C(1)$ does not lie in the ramification loci of $\widetilde \sM_{15,16}^4 \to \sM_{15}$ . If $(\phi,\psi)$ is good, then we call $(\psi,\phi(-3)), (\psi^t, \phi^t), (\phi^t,\psi^t(-3))$ and twists of these matrix factorizations good as well. In our constructions below an auxiliary curve $E$, a line bundle $\sL$ on $E$, and a  submodule $N \subset \Gamma_*(\sL)$ play a role.
We will always denote by
$$ d_E, g_E, d_\sL$$
the degree $\deg \sO_E(1)$ of $E$, the  genus of  $E$ and the degree of $\sL$, respectively.

Perhaps the easiest case is the construction for a Betti table of type
\begin{center}
\begin{tabular}{c|ccccc} 
  & 0 & 1 & 2 & 3 \\ \hline
0 & 5 & 9 & . & .  \\
1 & . & 3 & 13 &6 \\
\end{tabular}
\end{center}
We have to choose  $(E, \sO_E(1))$ a curve together with a very ample line bundle $\sO_E(1)$ of degree $d_E=11$ with $h^0(\sO_E(1))=5$, a cubic form $f \in H^0(\sI_E(3))$ up to a scalar, and the line bundle $\sL$.
From Riemann-Roch,
$$
h^0(\sO_E(1))-h^1(\sO_E(1)) = d_E + 1 -g_E
$$ 
we obtain $h^1(\sO_E(1))= g_E-7$.
Hence we expect that the pair $(E, \sO_E(1))$ depends on
$$
4g_E-3-h^0(\sO_E(1))h^1(\sO_E(1))=32-g_E
$$
parameters. Assuming that cubics cut a complete non-special linear series on $E$, we obtain
$$
34-(3d_E+1-g_E)=g_E
$$
parameters for the choice of $X$. Finally, since $N$ is an ACM-module, we have
$N=\Gamma_*(\sL)$ and 
$$
h^1(\sL)=h^0(\omega_E \tensor \sL^{-1} ) = \dim \Ext^3_S(N,S(-5))_0 = \beta_{3,5}^S(N)=0.
$$
Thus, the line bundle is non-special of degree $\deg \sL= g_E-1+5$ by Riemann-Roch, and  depends on $g_E$ parameters. Altogether we have
$$
32+g_E
$$
parameters.

\begin{theorem} \label{fam1}There exists a $42$-dimensional unirational family of tuples
$$(E,\sO_E(1),X,\sL) \hbox{ with } (d_E,g_E,d_\sL)=(11,10,14)$$
of a smooth curve $E$, a very ample line bundle $\sO_E(1)$ of degree $d_E=11$ and $h^0(\sO_E(1))=5$, a smooth cubic hypersurface  $X \subset \PP(H^0(\sO_E(1))) \cong \PP^4$ containing the image of $E$, and non-special line bundles $\sL$ on $E$, such that $N=\Gamma_*(\sL)$ has an $S$-resolution with Betti table $\beta^{S}(N)$
\begin{center}
\begin{tabular}{c|ccccc} 
  & 0 & 1 & 2 & 3 \\ \hline
0 & 5 & 9 & . & .  \\
1 & . & 3 & 13 &6 \\
\end{tabular}
\end{center}
such that for a general tuple the $S_X$ resolution of $N$ gives a good matrix factorization of desired shape.
\end{theorem}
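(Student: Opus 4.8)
The plan is to exhibit an explicit rational parametrization of the family of tuples and then, on a single tuple produced over a finite field, to check by machine that the associated matrix factorization is good and that $N=\Gamma_*(\sL)$ has the asserted Betti table; since all of these properties are Zariski--open, this establishes them on a dense subset.

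\emph{The parametrization.} Choose general points $p_1,\dots,p_5$ and $q_1,\dots,q_{10}$ in $\PP^2$ and a general plane septic $C_0$ with nodes at the $p_i$ and passing through the $q_j$; such septics form a linear system $\cong\PP^{10}$. Let $\nu\colon E\to C_0\subset\PP^2$ be the normalization, a smooth curve of genus $\binom{6}{2}-5=10$, and put $\sO_E(1)=\omega_E\otimes\nu^*\sO_{\PP^2}(-1)$. This line bundle has degree $11$, is cut out by the net of plane cubics through $p_1,\dots,p_5$, and embeds $E$ as a degree--$11$ curve on the quartic del~Pezzo surface $Y=\mathrm{Bl}_{p_1,\dots,p_5}\PP^2\subset\PP^4=\PP(H^0(\sO_E(1)))$; in particular $\sO_E(1)$ is very ample, $h^0(\sO_E(1))=5$ and $h^1(\sO_E(1))=h^0(\nu^*\sO_{\PP^2}(1))=3$, and (the cubics cutting a complete non-special series, as in the discussion preceding the theorem) $h^0(\sI_E(3))=11$, so $X$ may be taken to be a general cubic in $\PP^{10}=\PP(H^0(\sI_E(3)))$. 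Finally I set $\sL=\sO_E(1)\otimes\nu^*\sO_{\PP^2}(-1)\otimes\sO_E(q_1+\dots+q_{10})$, a line bundle of degree $4+10=14$; as the $q_j$ vary the factor $\sO_E(q_1+\dots+q_{10})$ runs through a dense subset of $\Pic^{10}(E)$, so $\sL$ runs through a dense subset of $\Pic^{14}(E)$, and in particular a general such $\sL$ is non-special, with $h^0(\sL)=5$.

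\emph{Dimension and unirationality.} The key point is the order of the choices. The parameter count forces $\sL$ to vary in a full $g_E=10$--parameter family, i.e.\ over essentially all of $\Pic^{14}(E)$, which is not a rational variety; but selecting the marked points $q_j$ in $\PP^2$ \emph{before} $C_0$, hence before $E$, exhibits the universal degree--$14$ Picard variety over the $22$--parameter base $\{(E,\sO_E(1))\}$ as dominated by the rational tower $\{p_i\}\times\{q_j\}\times\{C_0\}$. Consequently the full parameter space $\{p_i\}\times\{q_j\}\times\{C_0\}\times\{X\}$ is a tower of projective bundles over a product of projective spaces, hence rational, of dimension $10+20+10+10=50$; as $PGL_3$ acts on the marked plane with generically trivial stabilizer, the effective parameter space is $50-8=42$--dimensional and unirational. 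The map to tuples $(E,\sO_E(1),X,\sL)$ is generically injective: one recovers the plane model as the image of $|\omega_E\otimes\sO_E(-1)|$ and the $p_i$ as its nodes, then the $q_j$ as the unique effective divisor in $|\sL(-1)\otimes\nu^*\sO_{\PP^2}(1)|$, and $X$ from its equation. Hence the image is a $42$--dimensional unirational family of tuples.

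\emph{Goodness, and the main obstacle.} The Hilbert series of $N=\Gamma_*(\sL)$ is forced to be $(5+6t)/(1-t)^2$, so its $K$--polynomial is $5-9t-3t^2+13t^3-6t^4$ and the asserted table is the unique table of Proposition~\ref{tables} compatible with it; it remains to check that the minimal free resolution of $N$ displays no consecutive cancellation. Granting this, the recipe of Section~\ref{MatFac} produces an $S_X$--resolution of $N$ that is automatically minimal, since no two of its successive free modules ($S_X^5$, then $S_X^{9}(-1)\oplus S_X^3(-2)$, then $S_X^{18}(-3)$, then $S_X^{15}(-4)\oplus S_X^3(-5)$, and so on) share a generating degree, and whose $2$--periodic tail is a matrix factorization of exactly the type in Theorem~\ref{main}. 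Feeding this matrix factorization into the monad construction of Theorem~\ref{monad}, as in the second half of the proof of Theorem~\ref{main}, yields a complex $0\to\sO_X^3(-3)\to\sF\to\sO_X^3(-2)\to0$ whose homology has the numerical invariants of the ideal sheaf of a curve of degree $16$ and genus $15$ on $X$. What is not formal is precisely the content of the theorem: that for the general tuple this homology is genuinely $\sI_{C/X}$ for a \emph{smooth} such curve $C$; that $C$ lies in the component $\sH$ --- equivalently, is of maximal rank, equivalently $\Gamma_*(\sO_C)$ has the Betti table found earlier for a general curve of $\sH$ --- and that $\beta_{2,5}^S(\Gamma_*(\sO_C))=0$, so that $(C,\sO_C(1))$ avoids the ramification divisor of $\widetilde\sM_{15,16}^4\to\sM_{15}$. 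Each of these is an open condition, so the theorem follows once they are all verified on a single tuple constructed over some $\FF_q$; this is the computation performed in the package \href{http://www.math.uni-sb.de/ag/schreyer/home/computeralgebra.htm}{MatFac15}. I expect this last verification to be the real obstacle: there is no a~priori reason that a matrix factorization built from a genus--$10$ curve and a general degree--$14$ line bundle on it should, reinterpreted through Theorem~\ref{main}, produce a smooth genus--$15$ curve carrying an isolated, unramified $g^4_{16}$, and it is the machine computation --- not a structural argument --- that decides whether it does.
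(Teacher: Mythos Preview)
Your argument is correct and follows essentially the same route as the paper's: both construct $(E,\sO_E(1))$ from a plane septic with five assigned nodes, re-embed via cubics through the nodes onto a quartic del~Pezzo in $\PP^4$, choose $X\in\PP(H^0(\sI_E(3)))\cong\PP^{10}$, and make $\sL$ rational by expressing it through auxiliary marked points $q_1,\dots,q_{10}$ chosen \emph{before} the septic; the open conditions are then checked on a single finite-field example via \texttt{MatFac15}. Two small remarks: (i) your explicit line bundle $\sL=\sO_E(1)\otimes\nu^*\sO_{\PP^2}(-1)\otimes\sO_E(\sum q_j)$ differs from the paper's choice $\sL=\omega_E(q_1+q_2+q_3-q_4-\cdots-q_{10})$, but both sweep out a dense open in $\Pic^{14}(E)$ as the $q_j$ vary, so the single verified example in \texttt{MatFac15} certifies goodness for the general member of either parametrization; (ii) the space of plane cubics through five points is $5$-dimensional, hence a $\PP^4$, not a ``net''---a harmless slip of terminology.
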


\begin{proof} Since $h^1(\sO_E(1))=3$ we expect that $E$ has a plane model of degree $2g_E-2-d_E=7$ and $\delta={6 \choose 2}-g_E=5$ double points.
So we start with 5+10 general points $p_1, \ldots,p_5,q_1,\ldots,q_{10} \in \PP^2$ and a curve
$E' \subset \PP^2$ of degree $7$ with double points in $p_1,\ldots,p_5$ and simple points in $q_1,\ldots,q_{10}$. Let $E$ be the normalization of $E'$, $\sO_E(1)=\omega_E(-H)$
where $H$ denotes a general hyperplane section of $E' \subset \PP^2$ and
$\sL= \omega_E(q_1+q_2+q_3-(q_4+\ldots+q_{10}))$. The complete linear system $|\sO_E(1)|$ is cut out by plane cubics through $p_1,\ldots,p_5$,  so re-embeds $E$ into $\PP^4$ as a curve on a Del Pezzo surface $Y$ of degree $4$.  Hence $H^0(Y, \sI_{E/Y}(3))  \cong H^0(\PP^2, \sI_{\{p_1, \ldots,p_5\}}(2))$ is one-dimensional and $h^0(\PP^4, \sI_E(3))= 2\cdot 5+1=g_E+1$
as desired. Note, that $E$ is residual to  a line in the complete intersection of the cubic with two quadrics. 
Counting parameters we find $2\cdot 15-8=22$ parameter for the choice of the points up to projectivities, ${7+2 \choose 2}-3\cdot 5- 10 -1 = 10$ parameter for the choice of $E$   and another $10$ for the choice of $X$. 
So altogether we get the desired $42$. Clearly our parameter space is unirational. 

To verify  for a general point in this unirational parameter space, that the curve $E$  and the cubic $X$ are smooth,
that the module $N=\Gamma_*(\sL)$ has syzygies as expected, that the matrix factorization leads to a smooth curve $C$ of degree $d_C=16$ and genus $g_C=15$ such that
$\sO_C(1) \in W^4_{16}(C)$ is a smooth isolated point, can be done by producing a single example with these properties, because these properties are open conditions.
It is even enough to check this in an example defined over a finite  prime field $\FF_p$, 
since we may regard such an example as the reduction modulo $p$ of an example defined over the integers. By semicontinuity
the example over the generic point of $\Spec \ZZ_{(p)}$ is an example defined over $\QQ$, which has all desired properties.
We pick our example over a moderate-size prime field at random, and check all assertions with the computer algebra system \Mac \ using  the package \href{http://www.math.uni-sb.de/ag/schreyer/home/computeralgebra.htm}{MatFac15}. This computation completes the proof of Theorem \ref{fam1} and also the proof of Theorem \ref{main}.\end{proof}

\begin{remark} The reader might wonder why I did not try to construct a family of such modules $N$ using a curve $E$ of genus $g_E=11$. 
In this case the line bundle $\sO_E(2)$ of degree $22$ is non-special, hence $E \subset \PP^4$ would lie on at least $15-(22+1-11)=3$ quadrics, which by B\'ezout must have a surface $Y$ in common of degree $\le 3$. Since $35-(33+1-11)=12 $ I expect that $h^1(\sI_E(3))=1$, $h^0(\sI_E(3))=13$,
and that there are 2 linear syzygies among the quadrics. in this case, the surface $Y$ would be a cubic scroll, and the equation of the cubic hypersurface $X$ would be a linear combination of the quadrics, hence the determinant of a $3\times 3$ linear matrix. So $X$ would be singular. Since a general curve $C \in \sH$ lies on smooth cubic 3-fold by Theorem \ref{fam1}, the deduced family of curves of genus $g_C=15$ cannot dominate $\sM_{15}$. 
\end{remark}

Next we discuss the third table of Example \ref{dE=11}. As before we set $\sL= \widetilde N$. Since $H^1_\gm(N)$ is dual to $\Ext_S^4(N,S(-5))$ it is reasonable to assume that
$H^1_\gm(N) \cong K(-2)$ so that the Betti tables $\beta^S(N)$ and $ \beta^S(\Gamma_*(\sL))$ differ by a Koszul complex on the 5 linear forms: 
\begin{center}
\begin{tabular}{c|ccccc} 
  & 0 & 1 & 2 & 3 & 4 \\ \hline
0 & 1 & . & . & . & . \\
1 & 3& 10 & 1 & . &.\\
2 & . & . & 12 & 8  & 1\\
\end{tabular}
\quad and \quad
\begin{tabular}{c|ccccc} 
  & 0 & 1 & 2 & 3 \\ \hline
0 & 1 & . & . & .  \\
1 & 4& 15 & 11 & . \\
2 & . & . &2 & 3   \\
\end{tabular}
\end{center}
This time $\sL$ is a line bundle with $h^0(\sL)=1$ and $h^1(\sL)=3$, and degree $\deg \sL = g_E-3$. So $\sL$ is determined by the choice of $g_E-3$ general points on $E$. On the other hand, the choice of $N \subset \Gamma_*(\sL)$ corresponds to choosing a $3$-dimensional subspace of the $4$-dimensional
space of generators of $\Gamma_*(\sL)$ in degree $1$, i.e., to a point in $\PP^3$. Thus the pair
$(N, \sL)$ depends, given $E \subset \PP^4$,  again on $g_E$ parameters. Thus, if we choose $E \subset X$ as in the proof of Theorem \ref{fam1} as a curve of genus $g_E=10$ residual to a line, we get again a $42$-dimensional family.

\begin{theorem}\label{fam2} There exists a $42$-dimensional unirational family of tuples
$$(E,\sO_E(1),X,\sL,N) \hbox{ with } (d_E,g_E,d_\sL)=(11,10,7)$$
of a smooth curve of genus $g_E=10$, a very ample line bundle $\sO_E(1)$ of degree $d_E=11$ and $h^0(\sO_E(1))=5$, a smooth cubic hypersurface  $X \subset \PP(H^0(\sO_E(1))) \cong \PP^4$ containing the image of $E$, an effective line bundle $\sL$ on $E$  and a submodule $N \subset \Gamma_*(\sL)$, such that $N$ has an $S$-resolution with Betti table 
\begin{center}
\begin{tabular}{c|ccccc} 
  & 0 & 1 & 2 & 3 & 4 \\ \hline
0 & 1 & . & . & . & . \\
1 & 3& 10 & 1 & . &.\\
2 & . & . & 12 & 8  & 1\\
\end{tabular}
\end{center}
such that for general tuples the $S_X$-resolution of $N$ gives a good matrix factorization of desired type.
\end{theorem}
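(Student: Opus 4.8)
The plan is to repeat the proof of Theorem~\ref{fam1}, enlarging the parameter space so as to carry the additional data $\sL$ and $N$.

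\emph{Construction of the family.} As in Theorem~\ref{fam1}, fix $5$ general points $p_1,\dots,p_5\in\PP^2$ (the future nodes) and, in addition, $7$ general points $r_1,\dots,r_7\in\PP^2$; choose a plane curve $E'\subset\PP^2$ of degree $7$ with double points at the $p_i$ and passing through the $r_j$, let $E$ be its normalization, $\sO_E(1)=\omega_E(-H)$ for $H$ a general hyperplane section of $E'$, and $\sL=\sO_E(r_1+\dots+r_7)$, an effective line bundle of degree $d_\sL=g_E-3=7$. Exactly as in Theorem~\ref{fam1}, $|\sO_E(1)|$ re-embeds $E$ into $\PP^4$ as a curve on a Del Pezzo quartic, $h^0(\sI_E(3))=g_E+1=11$, and one picks a cubic $X\in\PP(H^0(\sI_E(3)))\cong\PP^{10}$. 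For general $r_j$ one has $h^0(\sL)=1$, and $\Gamma_*(\sL)$ has, besides its single degree-$0$ generator, a $4$-dimensional space of minimal degree-$1$ generators; following the recipe of this section, $N\subset\Gamma_*(\sL)$ is the submodule generated by $H^0(\sL)$ together with a chosen $3$-dimensional subspace of those, so that the choice of $N$ is a point of $G(3,4)\cong\PP^3$. Every ingredient varies over a rational variety: the configurations $(p_1,\dots,p_5,r_1,\dots,r_7)$ together with a septic through them with nodes at the $p_i$ form a projective bundle over $(\PP^2)^{12}$, the cubics through $E$ form a $\PP^{10}$, and $N$ varies over a $\PP^3$. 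Dividing by $PGL(3)$, which has a rational section on $(\PP^2)^{12}$, yields a rational---hence unirational---family of tuples $(E,\sO_E(1),X,\sL,N)$; its dimension is $16+13+10+3=42$ ($(\PP^2)^{12}$ modulo $PGL(3)$ contributing $16$, the linear system of septics $13$, the linear system of cubics through $E$ a further $10$, and $G(3,4)$ the last $3$), and the map to the moduli of such tuples is generically finite, since $E'$ is recovered up to $PGL(3)$ as the plane model attached to the residual series $|K_E-\sO_E(1)|$, the $p_i$ as its nodes, the $r_j$ as the support of the unique divisor of $|\sL|$, and $X,N$ directly. Intrinsically these are $22+7+10+3$ parameters, for $(E,\sO_E(1))$, the effective degree-$7$ bundle $\sL$, the cubic $X$, and the module $N$, respectively.

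\emph{Genericity.} As in Theorem~\ref{fam1}, everything that remains is an open condition on this family: that $E$ and $X$ are smooth, that $\sO_E(1)$ is very ample of degree $11$ with $h^0=5$, that $h^0(\sL)=1$ and $h^1(\sL)=3$, that the minimal $S$-resolution of $N$ has the Betti table displayed in the statement (so that, as expected, $H^1_\gm(N)\cong K(-2)$ and $\beta^S(N)$ differs from $\beta^S(\Gamma_*(\sL))$ by a Koszul complex on the five linear forms), that the induced $S_X$-resolution of $N$ is minimal with $2$-periodic part a matrix factorization of the shape of Theorem~\ref{main} up to twist and transpose, and that this matrix factorization is good---i.e.\ the monad of Theorem~\ref{monad} has as homology the ideal sheaf of a smooth curve $C$ of degree $16$ and genus $15$ with $\sO_C(1)$ a smooth isolated point of $W^4_{16}(C)$, lying off the ramification locus of $\widetilde\sM_{15,16}^4\to\sM_{15}$. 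It therefore suffices to exhibit a single tuple for which all of this holds, and, as in Theorem~\ref{fam1}, it is enough to do so over a finite prime field $\FF_p$: such an example is the reduction modulo $p$ of one defined over $\ZZ_{(p)}$, whose generic fibre is defined over $\QQ$, and by semicontinuity the general member of the family inherits all the properties. I would pick such an example at random over a moderate prime field and verify each assertion with \Mac\ using the package \href{http://www.math.uni-sb.de/ag/schreyer/home/computeralgebra.htm}{MatFac15}.

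\emph{Where the difficulty lies.} Given Theorem~\ref{fam1}, the dimension count and the unirationality are routine; the one point that needs care is that $E$ itself is not rational, which is why the $7$ points defining $\sL$ are taken as marked points of $\PP^2$ lying on the septic rather than moved directly on $E$. The real content is the last step: there is no a priori reason why the module $N$ built from a general $3$-dimensional subspace should have exactly the predicted minimal $S$-resolution and local cohomology, nor why the resulting matrix factorization should be good; these facts---which in principle could fail---are established only by the explicit \Mac\ computation, and that is where the weight of the proof lies. (Note that only the family of \emph{tuples} is asserted to be $42$-dimensional and unirational; its image in $\widetilde\sM_{15,16}^4$ may have strictly smaller dimension, consistent with Conjecture~\ref{mrc}.)
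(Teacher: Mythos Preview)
Your proposal is correct and follows essentially the same approach as the paper: a plane septic with five nodes passing through seven additional simple points defines $(E,\sO_E(1),\sL)$, with the same dimension count $16+13+10+3=42$, and the genericity is reduced to a single computer-verified example over a finite field. Your write-up is more explicit about the unirationality and the generic finiteness of the parametrization, but the construction and argument are the same.
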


\begin{proof} This time we take $E'$ as a septic with $5$ nodes, passing through additional $7$ simple points, which we use to define the line bundle $\sL$. The dimension count reads
\begin{center}
\begin{tabular}{rcr|l}
$2(5+7)-8$&$=$&$16$ & for the points in the plane up to projectivities \\
$36-3\cdot5-7-1$&$=$& $13$& for $E'$ \\
&&$3$ & for  $N \subset \Gamma_*(\sL)$ \\
&&$10$ & for the cubic $X \supset E$ \\ \hline
&& $42$ & parameters altogether \\
\end{tabular}
\end{center}
To check that general choices lead to a tuple with all desired properties follows again by a computation of a random example over a finite prime field.
See \href{http://www.math.uni-sb.de/ag/schreyer/home/computeralgebra.htm}{MatFac15}  for details. \end{proof}

For the last table of Example \ref{dE=11}, it is reasonable to assume that $\Ext_S^4(N,S(-6)) \cong S/(I_L+\gm^2)$, where $I_L$ denotes the homogeneous ideal of a line $L$. Then $H^1_\gm(N)$ has the Betti table
\begin{center}
\begin{tabular}{c|cccccc} 
  & 0 & 1 & 2 & 3 & 4 &5 \\ \hline
0 & 2 & 9 & 15 & 11  & 3 &.\\
1 & .  & .   & 1 &  3& 3 &1\\
\end{tabular}
\end{center}
Since  $\beta^S(N)$ and $\beta^S(\Gamma_*(\sL))$ differ by this table we get
\begin{center}
\begin{tabular}{c|ccccc} 
  & 0 & 1 & 2 & 3 & 4 \\ \hline
0 & 6 & 12 & . & . & . \\
1 & . & . & 11 &3 & . \\
2 & . & . & . & 3  & 1 \\
\end{tabular}
\quad and \quad
\begin{tabular}{c|ccccc} 
  & 0 & 1 & 2 & 3  \\ \hline
0 & 8 & 21 & 15 & . \\
1 & . & . & 1 &3  \\
\end{tabular}
\end{center}
for these tables. Note that we can recover the line $L$ from $\sL$: Its equations are given by the linear entries in the last syzygy matrix of $\Gamma_*(\sL)$.
So with $N'=\Hom_K(S/(I_L+\gm^2),K(-1))$ and $\phi \in \Hom_S(\Gamma_*(\sL),N')_0$ a surjective morphism, we can take
$N = \ker \phi$. In all cases computed, I found $\dim \Hom_S(\Gamma_*(\sL),N')_0 =1$, so that in these case $N$ is determined by $\sL$.
If this is true in general, then we can obtain another  42-dimensional unirational family as follows: Start again with $5+10$ points $p_1,\ldots,p_5,q_1,\ldots q_{10} \in \PP^2$ and  a general septic $E'$ with nodes in $p_1,\ldots,p_5$ and simple points in $q_1,\ldots q_{10}$, hence geometric  genus $g_E=10$.  As the non-special line bundle $\sL$ on $E$ of degree $\deg \sL=17$ we can take
$\sL = \sO_E(1) \tensor \sO_E(q_1+\ldots+q_8-q_9-q_{10})$. Then $\Gamma_*(\sL)$ determines a line $L$ and hence a module $N'$ as above. 

\begin{proposition} \label{fam3} If for general choices,
 $ \Hom_S(\Gamma_*(\sL),N')_0\not=0$ holds for  the construction above, then this gives  a $42$-dimensional unirational family of pairs $(N,X)$ with invariants
 $$(d_E,g_E,d_\sL)=(11,10,17)$$
  such that for general tuples the $S_X$ syzgyies give a good matrix factorization of desired shape.

\end{proposition}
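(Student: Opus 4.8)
The plan is to run the proof scheme of Theorems~\ref{fam1} and~\ref{fam2}, inserting one extra step where the hypothesis is consumed. First I would spell out the rational parametrization. Choose $5+10$ general points $p_1,\ldots,p_5,q_1,\ldots,q_{10}\in\PP^2$ and a general plane septic $E'$ with nodes at the $p_i$ and passing through the $q_j$, so that its normalization $E$ has genus $g_E=10$; put $\sO_E(1)=\omega_E(-H)=\sO_{E'}(3)(-p_1-\ldots-p_5)$ with $H$ the plane section, which re-embeds $E$ into $\PP^4$ as a curve on a Del Pezzo surface $Y$ of degree $4$, residual to a line in $Y\cap X$; set $\sL=\sO_E(1)\tensor\sO_E(q_1+\ldots+q_8-q_9-q_{10})$, a non-special line bundle of degree $17$; and pick a cubic $X\in\PP(H^0(\sI_E(3)))$. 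Exactly as in the proof of Theorem~\ref{fam1}, $H^0(Y,\sI_{E/Y}(3))\cong H^0(\PP^2,\sI_{\{p_1,\ldots,p_5\}}(2))$ is one-dimensional and $h^0(\PP^4,\sI_E(3))=2\cdot 5+1=g_E+1$, so $X$ moves in $10$ parameters. Granting the hypothesis, $\Gamma_*(\sL)$ has the Betti table $\beta^S(\Gamma_*(\sL))$ displayed before the statement, the linear entries of its last syzygy matrix cut out the line $L$, the module $N'=\Hom_K(S/(I_L+\gm^2),K(-1))$ is thereby determined, and for a surjective $\phi\in\Hom_S(\Gamma_*(\sL),N')_0$ one sets $N=\ker\phi$. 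The parameter count reads $2\cdot 15-8=22$ for the points modulo $PGL(3)$, ${7+2 \choose 2}-3\cdot 5-10-1=10$ for $E'$, and $10$ for $X$, totalling the claimed $42$; since every step is a choice of points or of a general member of a linear system, the family is unirational.

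The second step is to note that all remaining assertions are open conditions, so that it suffices to produce one explicit example. Concretely: $E$ and $X$ are smooth; $\Gamma_*(\sL)$ has precisely the stated Betti table, so that $L$ and hence $N'$ are unambiguous; $\dim_K\Hom_S(\Gamma_*(\sL),N')_0=1$, so that $N$ is canonically attached to $\sL$; a general $\phi$ is surjective and $N=\ker\phi$ has exactly the predicted $\beta^S(N)$, with no extra syzygies; and the $2$-periodic tail of the $S_X$-resolution of $N$ is a good matrix factorization of the desired shape, i.e., the monad of Theorem~\ref{monad} has as homology the ideal sheaf of a smooth curve $C$ of degree $16$ and genus $15$ with $\sO_C(1)$ an isolated smooth point of $W^4_{16}(C)$. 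I would construct one such tuple at random over a finite prime field $\FF_p$, verify all of this by computer with the package MatFac15, and then lift to an example over $\QQ$ by the semicontinuity argument already used in the proof of Theorem~\ref{fam1} (reduce an integral example modulo $p$, pass to the generic point of $\Spec\ZZ_{(p)}$); openness then spreads these properties over a dense subset of the $42$-dimensional parameter space. Finally, since $\widetilde N=\sL$, the curve $E\subset\PP^4$, its hyperplane bundle $\sO_E(1)$, the line bundle $\sL$ and the septic model $E'\subset\PP(H^0(\omega_E(-1)))$ together with its nodes $p_1,\ldots,p_5$ are all recovered from $(N,X)$, so the construction map is generically finite onto its image and the family of pairs $(N,X)$ is genuinely $42$-dimensional.

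The real obstacle, and the reason the statement is only conditional, is the non-vanishing $\Hom_S(\Gamma_*(\sL),N')_0\neq 0$ for general choices. It cannot be forced by a single example, because the dimension of this $\Hom$ is only upper semicontinuous on the parameter space: finding it equal to $1$ at a random point gives $\le 1$ on a dense open set, but does not exclude the value $0$ at the generic point. A genuine proof would need either a geometric construction of a non-zero homomorphism $\Gamma_*(\sL)\to N'$ --- presumably one extracted from the linear strand of the resolution of $\Gamma_*(\sL)$ that already singles out $L$ --- or a direct cohomological computation on $E$; with the methods of this paper I do not see how to supply either. A subordinate difficulty is the usual computational one: in the chosen example one must certify that a general $\phi$ is surjective, that $N=\ker\phi$ acquires no unexpected syzygies, and that the induced matrix factorization is good --- the last point being the delicate one, since it requires the monad to produce a \emph{smooth} curve $C$ whose embedding bundle $\sO_C(1)$ is an isolated point of $W^4_{16}(C)$ lying off the ramification divisor of $\widetilde\sM_{15,16}^4\to\sM_{15}$.
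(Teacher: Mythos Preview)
Your proposal is correct and follows essentially the same approach as the paper: the construction and the parameter count $22+10+10=42$ are exactly those set up in the text preceding the proposition (and inherited from Theorem~\ref{fam1}), and the paper's own proof consists solely of the computer-algebra verification over a finite field that you describe in your second paragraph. Your third paragraph, explaining why the non-vanishing hypothesis cannot be discharged by semicontinuity alone, matches the content of the Remark the paper places immediately after the proposition; and your generic-finiteness check (recovering $E$, $\sO_E(1)$, $\sL$ from $N$) is a reasonable addition that the paper leaves implicit.
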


\begin{proof} This is another computer algebra verification documented in \href{http://www.math.uni-sb.de/ag/schreyer/home/computeralgebra.htm}{MatFac15}.  \end{proof}

\begin{remark} Note that from our examples of Proposition \ref{fam3}, we can conclude $\dim \Hom_S(\Gamma_*(\sL),N')_0 \le 1$ for general $\Gamma_*(\sL)$ by semi-continuity. It is very unlikely that $ \Hom_S(\Gamma_*(\sL),N')_0=1$ does not for general choices, because this  would mean that our randomly chosen  examples, by accident, all lie in a proper subfamily. Having tested several examples over an field of approximate size $10^4$, this is nearly impossible. This is no rigorous proof, which might be actually be easy. I did not seriously tried to proof this, in view of Proposition \ref{tangentSpaces}. \end{remark}

Next we discuss a family which rises from modules with Betti table $\beta^S(N)$
\begin{center}
\begin{tabular}{c|ccccc} 
  & 0 & 1 & 2 & 3 & 4\\ \hline
0 & 2 & . & . & .  & .\\
1 & 1& 9 & . & . & .\\
2 & . & . & 14& 9 & 1  \\
\end{tabular}
\quad and \quad 
\begin{tabular}{c|ccccc} 
  & 0 & 1 & 2 & 3 \\ \hline
0 & 2 & . & . & .  \\
1 & 2& 14 & 10 & . \\
2 & . & . &  4& 4  \\
\end{tabular}
\end{center}
for  $\beta^S(\Gamma_*(\sL))$ which differ by a Koszul complex.
In this case $E$ has degree $d_E=14$ and assuming $h^0(\sO_E(1))=5$ we obtain $h^1(\sO_E(1))=g_E-10$, hence expect
$4g_E-3-h^0(\sO_C(1))\cdot h^1(\sO_C(1))=47-g_E$ parameters for the pair $(E, \sO_E(1))$.  The number of cubic hypersurfaces containing the image of $E$ in $\PP H^0(\sO_C(1)) \cong \PP^4$ is expected to be $g_E-9$. The line bundle $\sL = \widetilde N$ has degree   $\deg \sL = g_E-3$ and depends on $g_E -2\cdot 4 $ parameters. Finally, choosing
$N \subset \Gamma_*(\sL)$ corresponds to the choice of a point in $\PP^1$, which gives one more parameter. Altogether we have $g_E+31$ parameters. Choosing $g_E=11$ we can hope for a dominant family. The model of $E \subset \PP^3$ embedded by $\omega_E \tensor \sL^{-1}$ is the space model of degree 12 used by Chang and Ran to prove the unirationality of $\sM_{11}$. The line bundle $\sO_E(1) \cong \omega_E(-(p_1+\ldots +p_6)$ is the Brill-Noether dual to an effective divisor. We do not know how to construct $E$ together with $6$ points in a unirational way. 
But over a finite field one can easily find points in $E$ with a probabilistic method  Thus we are able to produce random elements in this family.

\begin{theorem}\label{fam4} There exists a $42$-dimensional family of tuples
$$(E,\sO_E(1),X,\sL,N) \hbox{ with } (d_E,g_E,d_\sL)=(14,11,8)$$
of a smooth curve $E$, a very ample line bundle $\sO_E(1)$ of degree $d_E=14$ and $h^0(\sO_E(1))=5$, a smooth cubic hypersurface  $X \subset \PP(H^0(\sO_E(1))) \cong \PP^4$ containing the image of $E$, a line bundle $\sL$ on $E$ such that $h^0(\sL)=2$  and a submodule $N \subset \Gamma_*(\sL)$, such that $N$ has an $S$-resolution with Betti table $\beta^{S}(N)$
\begin{center}
\begin{tabular}{c|ccccc} 
  & 0 & 1 & 2 & 3 & 4\\ \hline
0 & 2 & . & . & .  & .\\
1 & 1& 9 & . & . & .\\
2 & . & . & 14& 9 & 1  \\
\end{tabular}
\end{center}
such that the $S_X$ -resolution gives a good matrix factorization of desired type.
\end{theorem}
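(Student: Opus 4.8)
The plan is to imitate the proofs of Theorems~\ref{fam1} and~\ref{fam2}: build an explicit family of tuples, count parameters, and then verify the open ``good'' conditions on a single random example over a finite field. The new feature --- and the reason that, unlike there, no unirationality is asserted --- is that $E$ is produced from the Chang--Ran space model and that one step of the construction admits no rational parametrization. Concretely, I would first invoke Chang and Ran \cite{CR1}: their construction provides a unirational parametrization of a dense open subset of the $33$-dimensional, irreducible, reduced space of pairs $(E,\sL)$ with $E$ a smooth curve of genus $g_E=11$ and $\sL\in W^1_8(E)$ --- equivalently, of smooth space curves of degree $12$ and genus $11$ in $\PP^3$, embedded by $|\omega_E\tensor\sL^{-1}|$, up to $\mathrm{PGL}_4$ (indeed $3g_E-3+\rho(11,1,8)=30+3=33$). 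Over a finite field $\FF_q$ this produces $E$ and $\sL$ explicitly. Then comes the non-unirational step: I would choose an effective divisor $D=p_1+\dots+p_6$ of degree $6$ on $E$. For general $D$ one has $h^0(\sO_E(D))=1$, so $\sO_E(1):=\omega_E(-D)$ has degree $d_E=14$, $h^0(\sO_E(1))=5$, $h^1(\sO_E(1))=1$ and is very ample; the divisor $D$ moves in a $6$-dimensional family, but since $\mathrm{Sym}^6$ of the universal Chang--Ran curve need not be unirational over its base there is no rational way to pick it --- over $\FF_q$ one locates the $p_i$ probabilistically, as in the algorithm of Theorem~\ref{random curve}.

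Re-embedding $E$ by the complete system $|\sO_E(1)|$ puts it into $\PP^4=\PP(H^0(\sO_E(1)))$. Here $\sO_E(3)$ is non-special and, for general data, the degree-$3$ restriction map $H^0(\sO_{\PP^4}(3))\to H^0(\sO_E(3))$ is surjective, so $h^0(\sI_E(3))=35-32=3$; I would then pick a general cubic $X$ in the resulting $\PP^2=\PP H^0(\sI_E(3))$, which is smooth (two more parameters). Finally, under the standing assumption $H^1_\gm(N)\cong K(-1)$ the tables $\beta^S(N)$ and $\beta^S(\Gamma_*(\sL))$ differ by a Koszul complex on the five linear forms, and the choice of $N\subset\Gamma_*(\sL)$ amounts to retaining one general element of the two-dimensional space of new degree-$1$ generators of $\Gamma_*(\sL)$, i.e.\ a point of $\PP^1$. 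The parameter count is $33+6+2+1=42$; and since the resulting parameter space of tuples $(E,\sO_E(1),X,\sL,N)$ is a $\PP^1$-bundle over a $\PP^2$-bundle over a $\mathrm{Sym}^6$-bundle over the irreducible reduced Chang--Ran base, with each of the four summands genuinely contributing, it is irreducible and reduced of dimension exactly $42$.

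It then remains to check that a general tuple is ``good'': that $N$ has the $S$-Betti table $\beta^S(N)$ shown in the statement, that the induced $S_X$-resolution from Section~\ref{MatFac} is minimal, and that its $2$-periodic part --- up to the usual twist and transpose --- is a matrix factorization of the type in Theorem~\ref{main} whose monad, as in Theorem~\ref{monad}, is the ideal sheaf of a smooth curve $C\subset X$ of degree $16$ and genus $15$ with $\sO_C(1)$ a smooth isolated point of $W^4_{16}(C)$. All of these are open conditions on the irreducible family, so it suffices to exhibit one tuple over some field where they all hold; as in the other constructions I would generate one at random over a moderate-size prime field $\FF_p$ --- using the probabilistic point search for the six $p_i$ --- verify every assertion with \Mac\ via the package \href{http://www.math.uni-sb.de/ag/schreyer/home/computeralgebra.htm}{MatFac15}, and use semicontinuity to transport the example to the generic point of the family, hence to $\QQ$. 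On the same example I would also check that the induced rational map to $\widetilde\sM^4_{15,16}$ has finite fibres, which gives the dominance used later for Theorem~\ref{uniruled}.

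The hard part is precisely the non-unirational step: I do not expect to be able to choose the six points $p_1,\dots,p_6$ --- equivalently the line bundle $\sO_E(1)\in W^4_{14}(E)$, Brill--Noether dual to the effective divisor $D$ --- rationally in terms of the Chang--Ran data, so the construction is only shown to produce a $42$-dimensional (and, over finite fields, effectively sampleable) family rather than a unirational one; this is exactly the obstruction to deducing unirationality of $\sM_{15}$ by this route. The remaining, purely computational, difficulties --- that $h^0(\sI_E(3))=3$, that the admissible $N$ sweep out a $\PP^1$, that the displayed Betti table holds, and that the resulting $g^4_{16}$ is generic --- I expect to be unproblematic, and a single well-chosen example over $\FF_p$ settles all of them at once.
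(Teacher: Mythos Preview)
Your proposal is correct and follows essentially the same route as the paper. The paper's proof is a one-line deferral to the \texttt{MatFac15} computation, but the discussion preceding the theorem lays out exactly the construction you describe: start from the Chang--Ran degree-$12$ space model of a genus-$11$ curve (which packages the pair $(E,\sL)$ with $\sL\in W^1_8(E)$), pick six points to form $\sO_E(1)=\omega_E(-(p_1+\dots+p_6))$ (the non-unirational step), choose a cubic through the image in $\PP^4$, and finally choose $N\subset\Gamma_*(\sL)$ as a point of $\PP^1$; your parameter count $33+6+2+1=42$ is a harmless regrouping of the paper's $(47-g_E)+(g_E-9)+(g_E-8)+1=42$. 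The only addition is your explicit finite-fibre check for dominance, which the paper postpones to Section~\ref{tangComp}.
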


\begin{proof} This follows from another computation over a finite field documented in \href{http://www.math.uni-sb.de/ag/schreyer/home/computeralgebra.htm}{MatFac15}. \end{proof}

Our next construction is a family of modules $N$ with Betti table
\begin{center}
\begin{tabular}{c|ccccccc} 
  & 0 & 1 & 2 & 3  \\ \hline
0 & 6 & 11 & . & .  \\
1 & . &  2 & 12 &  4 \\
2 & . & .  &  .  & 1 \\
\end{tabular}
\end{center}
The support $E$ has degree $d_E=14$. We will construct $E$ as a  curve  residual to a rational normal curve $R$ of degree $d_R=4$ in a complete intersection $(2,3,3)$
of degree $18$. Such curves have a Betti table
\begin{center}
\begin{tabular}{c|ccccccc} 
  & 0 & 1 & 2 & 3  \\ \hline
0 & 1 & . & . & .  \\
1 & . &  1 & . &  . \\
2 & . &  2&  .  & . \\
3 & . &  3 & 10 & 5 \\
\end{tabular}
\end{center}
 and genus $g_E=15$. The line bundle $\sL= \widetilde N$ has to have degree $\deg \sL = 19$
 since $h^0(\sL)-h^1(\sL)=6-1=\deg \sL +1 -g_E.$
 So it has  the form $\sL=\omega_E(-D)$, where $D$ is an effective divisor of degree $9$ on $E$. A unirational construction of $N$ runs as follows:
 Start with a rational normal curve $R \subset \PP^4$ and choose one point $p_0$. Choose a quadric $Q$ containing $R$ and $p_0$. Choose $8$ lines $\ell_i$ through $p_0$
 and take $p_i$ as the second intersection point of $\ell_i\cap Q$. Then choose $X,X'$ two general cubic hypersurfaces through $R \cup \{p_0,\ldots,p_9\}$.
 The residual $E$ of $R$ in $Q \cap X \cap X'$ is the desired curve, and $D=p_0+\ldots+p_8$ is the desired effective divisor of degree $9$ on $E$

\begin{theorem} \label{fam5} Up to projectivities, there is a $47$-dimensional unirational family of tuples
$$(X,E,\sL) \hbox{ with } (d_E,g_E,d_\sL)=(14,15,19)$$
of a cubic hypersurface $X$, curves $E$ residual to a rational normal curve $R$ of degree $d_R=4$ in a complete intersection
$Q \cap X \cap X'$, where $Q$ is a quadric and $X'$ a further cubic hypersurface and $\sL=\omega_E(-D)$ for $D$ an effective divisor of degree $9$ on $E$ such that $N= H^0_*(\sL)$
has Betti table $\beta^S(N)$
\begin{center}
\begin{tabular}{c|ccccccc} 
  & 0 & 1 & 2 & 3  \\ \hline
0 & 6 & 11 & . & .  \\
1 & . &  2 & 12 &  4 \\
2 & . & .  &  .  & 1 \\
\end{tabular}
\end{center}
For general choices, the $S_X$ resolution gives a good matrix factorization of desired type. \end{theorem}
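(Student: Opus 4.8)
The plan is to proceed exactly as in the proofs of Theorems~\ref{fam1}--\ref{fam4}: realize the construction described just above the theorem as a dominant rational map from an explicit unirational parameter space, then count dimensions, and finally verify the remaining open conditions on one random example over a finite prime field. Let $\mathcal{T}$ be the space of tuples $(R,p_0,Q,\ell_1,\dots,\ell_8,X,X')$ with $R\subset\PP^4$ a rational normal quartic, $p_0\in\PP^4$ a point, $Q$ a quadric through $R\cup\{p_0\}$, each $\ell_i$ a line through $p_0$ (so that $p_i:=(\ell_i\cap Q)\setminus\{p_0\}$ is determined), and $X,X'$ cubics through $R\cup\{p_0,\dots,p_8\}$; to such a tuple one attaches $E=$ the curve residual to $R$ in $Q\cap X\cap X'$, the divisor $D=p_0+\dots+p_8$, the line bundle $\sL=\omega_E(-D)$, and $N=H^0_*(\sL)$. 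The space $\mathcal{T}$ is unirational: rational normal quartics form a single $PGL(5)$-orbit, hence are dominated by the rational group $PGL(5)$, and the remaining data constitute a tower of projective and affine bundles over that orbit; since the construction factors through $\mathcal{T}$, the resulting family of triples $(X,E,\sL)$ is unirational.

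For the dimension I would count the parameters in $\mathcal{T}$: a rational normal quartic is a point of $PGL(5)/\mathrm{Aut}(\PP^1)$, giving $24-3=21$; the point $p_0$ gives $4$; since $R$ is arithmetically Cohen--Macaulay, $h^0(\sI_R(2))=6$, so the choice of $Q$ gives $6-1-1=4$; each $\ell_i$ moves in a $\PP^3$, giving $24$; and $h^0(\sI_R(3))=22$, out of which the nine points $p_0,\dots,p_8$ --- general on $Q$, since projection from $p_0\in Q$ identifies $Q$ birationally with $\PP^3$ --- remove $9$, so $X$ and $X'$ give $22-9-1=12$ each. Hence $\dim\mathcal{T}=21+4+4+24+12+12=77$. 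The crucial point is that the construction map $\mathcal{T}\dashrightarrow\{(X,E,\sL)\}$ has $6$-dimensional general fibres. Indeed, from $(X,E,\sL)$ one recovers: the quadric $Q$, as the unique quadric through $E$ (for general $E$, $\sO_E(2)$ is non-special, so $h^0(\sI_E(2))=1$); the divisor $D$, as the unique member of $|\omega_E\otimes\sL^{-1}|$ (since $h^1(\sL)=h^0(\sO_E(D))=1$ for a general effective divisor of degree $9$ on a genus-$15$ curve), hence $p_0,\dots,p_8$ and the $\ell_i$ up to the finitely many admissible orderings of $D$; and, because $\deg\sO_E(3)=42>2g_E-2$, the equality $h^0(\sI_E(3))=35-28=7=h^0(\sI_{R\cup E}(3))$ shows that every cubic through $E$ already contains $R$ and that the complete intersection $Q\cap X\cap X'$ depends only on $X'$ modulo $(Q,X)_3$, so $R$ too is determined by $(E,X)$. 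The only remaining freedom is the choice of $X'$ inside $\PP\bigl(H^0(\sI_E(3))\bigr)\setminus\PP\bigl((Q,X)_3\bigr)\cong\mathbb{A}^6$. Therefore $\dim\{(X,E,\sL)\}=77-6=71$, and modulo $PGL(5)$ one gets $71-24=47$.

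It then remains to check that for a general point of $\mathcal{T}$ the curves $R$ and $E$ are smooth and irreducible of the asserted degrees and genera, that $X$ is a smooth cubic threefold, that $N=H^0_*(\sL)$ has the prescribed Betti table $\beta^S(N)$, that the $2$-periodic tail of the $S_X$-resolution of $N$ is a matrix factorization of the shape occurring in Theorem~\ref{main}, and that this matrix factorization is \emph{good} --- i.e.\ the monad of Theorem~\ref{monad} has as homology the ideal sheaf of a smooth curve $C$ of degree $16$ and genus $15$ with $\sO_C(1)$ a smooth isolated point of $W^4_{16}(C)$. All of these are open conditions on $\mathcal{T}$, and, just as in the proof of Theorem~\ref{fam1}, they persist under reduction modulo a prime and lifting back to characteristic $0$ by semicontinuity over $\Spec\ZZ_{(p)}$. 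Hence it suffices to produce one such tuple at random over a moderate prime field $\FF_p$ and to check all the assertions there, which is the computation documented in the \Mac\ package \href{http://www.math.uni-sb.de/ag/schreyer/home/computeralgebra.htm}{MatFac15}.

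The hard part is the dimension bookkeeping rather than any single geometric step: one must see that the naive count over-produces by precisely the $6$ parameters coming from the superfluous second cubic $X'$, and the identity $h^0(\sI_E(3))=h^0(\sI_{R\cup E}(3))$ that makes this precise already presupposes that a general $E$ in the family is a smooth curve of degree $14$ and genus $15$ --- so the clean count is only fully licensed once the computer verification of the numerical type of $E$ is in hand. Everything else reduces, as in the earlier families, to the routine but computer-assisted confirmation that the produced matrix factorization is good.
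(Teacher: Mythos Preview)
Your proof is correct and follows the same strategy as the paper: a unirational parameter count plus computer verification of the open conditions over a finite prime field. The only difference is organizational—the paper counts modulo $PGL(5)$ from the outset and directly notes that $X'$ contributes only $6$ parameters (since the residual curve depends on $X'$ only modulo the ideal $(Q,X)$), reaching $1+4+24+12+6=47$, whereas you count $\dim\mathcal{T}=77$ absolutely and then subtract the $6$-dimensional fibre and $\dim PGL(5)=24$; your explicit recovery of $Q$, $D$, and $R$ from $(X,E,\sL)$ is a welcome justification of that fibre dimension, which the paper leaves implicit (it instead confirms the count by a tangent-space computation at a random point).
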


\begin{proof} Most of the result follows from a computation in \Mac \ documented in \href{http://www.math.uni-sb.de/ag/schreyer/home/computeralgebra.htm}{MatFac15}.  For the dimension count we note that the stabilizer of $R$ in $\PGL(5)$ has dimension $3$. Thus $R \cup \{p_0\}$ depends up to projectivities on one parameter, (the cross ratio if we think of $\PP^4=\PP(H^0(\PP^1,\sO(4)))$ as the linear system of quartic polynomials). Choosing $Q$ gives $4$ parameters since $h^0(\PP^4,\sI_{R\cup\{p_0\}}(2))=5$. The lines give $24=8*3$ parameters, and $X$
gives another $12=6\cdot 5-8-9-1=h^0(\sI_{R\cup\{p_0,\ldots,p_7\}}(3))-1$ parameters. Finally the choice of $X'$ are $12-5-1=6$ further parameters, since the construction depends on the equations of $X'$ only modulo the equation of $Q$ and $X$. Altogether this are
$1+4+24+12+6=47$ parameters. A tangent space computation at a general point shows that this space is generically smooth with its natural scheme structure.
\end{proof}

\begin{theorem} \label{fam5a} Up to projectivities, there is a $46$-dimensional family of tuples
$$(X,E,\sL) \hbox{ with } (d_E,g_E,d_\sL)=(14,14,18)$$
of a cubic hypersurface $X$, curves $E$ and  a line bundle $\sL=\omega_E(-D)$ for $D$ an effective divisor of degree $8$ on $E$ such that $N= H^0_*(\sL)$
has Betti table $\beta^S(N)$
\begin{center}
\begin{tabular}{c|ccccccc} 
  & 0 & 1 & 2 & 3  \\ \hline
0 & 6 & 11 & . & .  \\
1 & . &  2 & 12 &  4 \\
2 & . & .  &  .  & 1 \\
\end{tabular}
\end{center}
%6-1=18+1-14, 18+8=2*14-2
For general choices, the $S_X$ resolution gives a good matrix factorization of desired type.  \end{theorem}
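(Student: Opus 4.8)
The plan is to run the argument of Theorem~\ref{fam5} essentially verbatim, replacing the rational normal quartic by a degree~$4$ curve of arithmetic genus $-1$, so that the curve residual to it in a complete intersection of type $(2,3,3)$ has genus $14$ rather than $15$. Concretely, I would take $R=C_1\sqcup C_2$ a disjoint union of two conics in $\PP^4$ (any rational curve of degree $4$ and arithmetic genus $-1$ lying on a quadric would serve); since $h^0(\PP^4,\sI_R(2))=15-5-5=5$, such an $R$ lies on a $\PP^4$ of quadrics. Choose a quadric $Q\supset R$, a marked point $p_0$, seven general lines $\ell_1,\dots,\ell_7$ through $p_0$ with second intersection points $p_i=\ell_i\cap Q$, and two general cubics $X,X'$ through $R\cup\{p_0,\dots,p_7\}$. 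Let $E$ be residual to $R$ in the complete intersection $Q\cap X\cap X'$ of degree $18$; then $\deg E=14$, and the liaison formula $p_a(E)-p_a(R)=\tfrac{1}{2}(14-4)(2+3+3-5)=15$ gives $p_a(E)=14$. Set $D=p_0+\dots+p_7$, $\sL=\omega_E(-D)$ and $N=H^0_*(\sL)$; since $\deg\sL=2g_E-2-8=18$ and $D$ is general, $h^0(\sL)=g_E-8=6$ and $h^1(\sL)=h^0(\sO_E(D))=1$, in agreement with the prescribed $\beta^S(N)$ (whose $(0,0)$-entry is $6$).

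Once the construction is set up, all remaining assertions are open conditions, so --- exactly as in the proofs of Theorems~\ref{fam1} and \ref{fam5} --- it suffices to exhibit a single random example over a finite prime field $\FF_p$ with the package MatFac15 and invoke semicontinuity. I would verify there that $E$ is smooth and irreducible of degree $14$ and genus $14$, that $X$ is a smooth cubic threefold, that $N=H^0_*(\sL)$ has the claimed Betti table $\beta^S(N)$, that the induced $S_X$-resolution is minimal with $2$-periodic part a matrix factorization of the shape occurring in Theorem~\ref{main}, and finally that this matrix factorization is good, i.e. the monad of Theorem~\ref{monad} has as homology the ideal sheaf of a smooth curve $C\subset X$ of degree $16$ and genus $15$ with $(C,\sO_C(1))$ outside the ramification locus of $\widetilde\sM^4_{15,16}\to\sM_{15}$. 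For the dimension, one counts parameters exactly as in Theorem~\ref{fam5}: the pair $C_1\sqcup C_2$ together with $p_0$ modulo $\PGL(5)$ (the stabilizer of a general pair of disjoint conics being one-dimensional, this contributes $3$), the quadric $Q\in\PP(H^0(\sI_R(2)))\cong\PP^4$ ($4$ parameters), the seven lines ($7\cdot 3=21$), the cubic $X$ through $R\cup\{p_0,\dots,p_7\}$ modulo scalars, and $X'$ through the same locus modulo scalars, linear multiples of $Q$, and $X$; the total comes to $46$, and a tangent-space computation at a general point shows the parameter space is generically reduced of that dimension.

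The main obstacle I anticipate is twofold. First, because the linked scheme $R$ is now reducible, one must check --- and this is more delicate than in Theorem~\ref{fam5} --- that a general $(2,3,3)$ complete intersection containing $R$ genuinely splits off as residual a \emph{smooth irreducible} curve $E$ with the asserted invariants, and that the resulting Betti numbers of $N$ come out as needed; I expect this to be settled only by running the explicit example rather than by a clean a priori argument. Second, keeping the count honest requires the exact stabilizer of a general pair of disjoint conics and the precise values of the twisted $h^0$'s entering the count, and a miscount here is the likeliest source of error; relatedly, producing the effective divisor $D$ on $E$ need not be a unirational step, which is why the family is only claimed to be $46$-dimensional and not (as for Theorem~\ref{fam5}) unirational. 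As elsewhere in the paper, all numerical and syzygy-theoretic claims are reduced to one computation over $\FF_p$ documented in MatFac15.
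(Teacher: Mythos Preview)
Your approach is genuinely different from the paper's, and it has a real gap. The paper does \emph{not} construct $E$ by liaison. Its proof begins by observing that a maximal-rank curve of degree $14$ and genus $14$ in $\PP^4$ lies on \emph{no} quadric: since $\deg\sO_E(2)=28>2g_E-2$, Riemann--Roch gives $h^0(\sO_E(2))=15=h^0(\sO_{\PP^4}(2))$, so $h^0(\sI_E(2))=0$ for $E$ of maximal rank. Instead, because $h^1(\sO_E(1))=4$, the general $E$ has a model in $\PP^3$ via $|\omega_E(-1)|$; the corresponding Hilbert-scheme component is unirational, the space of pairs $(E,X)$ modulo projectivities has dimension $38$, and the $8$ points of $D$ contribute $8$ more, for $46$.

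Your liaison construction forces $E\subset Q$, so you are producing only curves in the proper closed locus $\{h^0(\sI_E(2))\ge 1\}$. This cuts the $33$-dimensional family of pairs $(E,\sO_E(1))$ down to at most $32$, and one does not recover the lost parameter from extra cubics: from $0\to\sO_{\PP^4}(1)\to\sI_E(3)\to\sI_{E/Q}(3)\to 0$ one still gets $h^0(\sI_E(3))=6$ generically, so the count for $(E,X)$ drops to at most $37$ and the total to at most $45$. Concretely, your count already slips at the quadric step: since $p_0$ must lie on $E\subset Q$, one needs $Q\supset R\cup\{p_0\}$, which gives $3$ parameters rather than $4$. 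Carrying through the analogue of the Theorem~\ref{fam5} count with $R=C_1\sqcup C_2$ and $8$ marked points, one lands below $46$ regardless of how the stabilizer of two conics is resolved. This is consistent with the paper's later Remark, where the author reports that the liaison variants of Theorems~\ref{fam5} and \ref{fam6} obtained from other degree-$4$ curves $R$ (including the case $R=C_1\cup C_2$) yield families whose image in $\sM_{15}$ has dimension at most $39$; by contrast, the family of Theorem~\ref{fam5a} is used in Corollary~\ref{dim fam} as one of the \emph{dominant} families. So the liaison route with two conics cannot supply the $46$-dimensional family the theorem asserts; the missing idea is precisely to use general (maximal-rank) curves $E$ of degree $14$ and genus $14$, accessed through their $\PP^3$ models, rather than curves constrained to lie on a quadric.
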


\begin{proof} A maximal rank curve  $E$ has degree $14$ and genus $14$ we we have that $E$ lies on no quadric. However, since $h^1(E,\sO_E(1))=4$, these curves have a model
in $\PP^3$ and the corresponding component of the Hilbert scheme is unirational. The space of pairs $(E,X)$ up to projectivities is unirational of dimension $38$. The choice of $8$ points on $E$ gives further $8$ parameters. Checking an example by computation in \Mac \ documented in \href{http://www.math.uni-sb.de/ag/schreyer/home/computeralgebra.htm}{MatFac15}  implies the result.
\end{proof}

Our last two example concern the construction of  matrix factorizations from modules not covered by Proposition \ref{tables}. We will construct a unirational family of modules with Betti table
\begin{center}
\begin{tabular}{c|ccccccc} 
  & 0 & 1 & 2 & 3  \\ \hline
0 & 7 & 15 & 4 & .  \\
1 & . &  . & 8 &  3 \\
2 & . & .  &  .  & 1 \\
\end{tabular}
\end{center}
whose support is as in family of Theorem \ref{fam5} a curve $E$
which is residual to a rational normal curve $R$ of degree $4$ in a complete intersection $(2,3,3)$. This time the  line bundle $\sL=\widetilde N$ must have  degree $\deg \sL = 20 $
by Riemann-Roch. Thus $\sL = \omega_E(-D)$
where $D$ is an effective divisor of degree $8$. Following the same construction as for the family in Theorem \ref{fam5}, we get:

\begin{theorem} \label{fam6} Up to projectivities, there is a $46$-dimensional unirational family of tuples
$$(X,E,\sL) \hbox{ with } (d_E,g_E,d\sL)=(14,15,20)$$
of a cubic hypersurface $X$, curves $E$ residual to a rational normal curve $R$ of degree $d_R=4$ in a complete intersection
$Q \cap X \cap X'$, where $Q$ is a quadric and $X'$ a further cubic hypersurface and $\sL=\omega_E(-D)$ for $D$ an effective divisor of degree $8$ on $E$ such that $N= H^0_*(\sL)$
has Betti table $\beta^S(N)$
\begin{center}
\begin{tabular}{c|ccccccc} 
  & 0 & 1 & 2 & 3  \\ \hline
0 & 7 & 15 & 4 & .  \\
1 & . &  . & 8 &  3 \\
2 & . & .  &  .  & 1 \\
\end{tabular}.
\end{center}
% 7-1=20+1-15, 20+8=2*15-2
For general choices, the $S_X$ resolution gives a good matrix factorization of desired type. \end{theorem}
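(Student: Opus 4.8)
The plan is to repeat, \emph{mutatis mutandis}, the construction behind Theorem \ref{fam5}, lowering the degree of the auxiliary divisor from $9$ to $8$. Concretely I would fix a rational normal curve $R\subset\PP^4$ of degree $d_R=4$, choose a point $p_0\in\PP^4$, a quadric $Q\in\PP(H^0(\sI_{R\cup\{p_0\}}(2)))$, seven lines $\ell_1,\dots,\ell_7$ through $p_0$ (each a point of the $\PP^3$ of lines through $p_0$), take $p_i$ to be the second intersection point of $\ell_i\cap Q$, and pick two cubics $X,X'\in\PP(H^0(\sI_{R\cup\{p_0,\dots,p_7\}}(3)))$. Then I set $E$ equal to the curve residual to $R$ in the complete intersection $Q\cap X\cap X'$, $D=p_0+\dots+p_7$, $\sL=\omega_E(-D)$ and $N=H^0_*(\sL)$. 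By liaison in the $(2,3,3)$ complete intersection, $\deg E=18-4=14$ and $p_a(E)=15$; by Riemann--Roch, $\deg\sL=28-8=20$ with $h^0(\sL)=7$ and $h^1(\sL)=1$, as forced by the prescribed Betti table. Since every ingredient is chosen from a projective space (or the space of lines through a fixed point) and $p_i$ depends rationally on $(\ell_i,Q)$, this parameter space is rational, hence unirational.

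The next step is the dimension count. The stabiliser of $R$ in $\PGL(5)$ has dimension $3$, so $R\cup\{p_0\}$ contributes one parameter (a cross ratio); $Q$ contributes $h^0(\sI_{R\cup\{p_0\}}(2))-1=4$; the seven lines contribute $7\cdot 3=21$; the cubic $X$ contributes $6\cdot 5-8-8-1=13$ (there are $h^0(\sI_R(3))=6\cdot 5-8=22$ cubics through $R$, minus $8$ point conditions, minus $1$ for scaling); and $X'$, which changes $E$ only modulo $X$ and modulo the cubics $Q\cdot H^0(\sO_{\PP^4}(1))$, contributes $13-5-1=7$. Altogether $1+4+21+13+7=46$, as claimed. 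I would also check, by a tangent space computation at a sample point, that this parameter space is generically reduced of dimension $46$.

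The one genuinely new feature compared with Theorem \ref{fam5} --- and the reason this table is not among the $39$ of Proposition \ref{tables} --- is that the $S_X$-resolution of $N$ produced by the recipe of Section \ref{MatFac} is \emph{not} minimal: a free summand $\sO_X(-5)$ shows up in homological degree $3$ and $\sO_X(-5)^4$ in homological degree $4$ (and likewise, shifted by $3$, in every higher pair), so one copy cancels. Hence, besides verifying that $N=H^0_*(\sL)$ has the stated $S$-Betti table, one must check that this cancellation really occurs --- an open condition on the constant block of the relevant differential --- after which the two-periodic tail is, up to twist, the transpose $(\psi^t,\varphi^t)$ of a matrix factorization of the type in Theorem \ref{main}, hence good in the sense of Section \ref{constructions}. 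All the remaining requirements --- $E$ and $X$ smooth, $\sL$ with the predicted cohomology, the induced monad being a monad for the ideal sheaf of a \emph{smooth} curve $C$ of degree $d_C=16$ and genus $g_C=15$ on $X$, and $\sO_C(1)$ a smooth isolated point of $W^4_{16}(C)$ --- are open conditions too, so it suffices to exhibit a single tuple with all of them. As in the proof of Theorem \ref{fam1}, it is even enough to do this over a finite prime field $\FF_p$: such an example is the reduction of one over $\Spec\ZZ_{(p)}$, whose generic fibre over $\QQ$ then has all the desired open properties by semicontinuity. This verification is carried out with the \Mac{} package \href{http://www.math.uni-sb.de/ag/schreyer/home/computeralgebra.htm}{MatFac15}.

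The step I expect to be the real obstacle is precisely the one forcing this computer verification: it cannot be replaced by a Brill--Noether or general-position argument, because $D$ is \emph{not} a general effective divisor of degree $8$ on $E$ --- the symmetric power $E^{(8)}$ is not unirational, which is exactly why the construction routes through the lines $\ell_i$ on the quadric $Q$ in the first place --- so the Betti numbers of $\sL=\omega_E(-D)$, the cancellation in the $S_X$-resolution, the smoothness of $C$, and the isolatedness of $\sO_C(1)$ in $W^4_{16}(C)$ cannot be deduced formally and must be certified on an explicit random example. In any case, like the other unirational families in this section, the one obtained here does not dominate $\widetilde\sM^4_{15,16}$, in agreement with Conjecture \ref{mrc}.
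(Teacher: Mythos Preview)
Your proposal is correct and follows the paper's own proof essentially verbatim: the same construction (seven lines instead of eight), the identical dimension count $1+4+21+13+7=46$, the tangent space check for generic reducedness, and the appeal to a random example over a finite prime field verified in \texttt{MatFac15}. Your added paragraph explaining \emph{why} this table falls outside Proposition \ref{tables} --- namely that the derived $S_X$-resolution has an $\sO_X(-5)$ in homological degree $3$ against an $\sO_X^4(-5)$ in degree $4$, forcing a single cancellation before the tail becomes the transpose of the desired $18\times(15+3)$ matrix factorization --- is correct and is a genuine clarification that the paper leaves implicit.
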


\begin{proof} Most of the results follow from a computation in \Mac \  documented in \href{http://www.math.uni-sb.de/ag/schreyer/home/computeralgebra.htm}{MatFac15}.  The dimension count gives 
$1+4+3\cdot 7+13+7=46$ parameters this time. A tangent space computation at a general point of the parameter space shows that this space is generically smooth with its natural scheme structure.
\end{proof}

\begin{theorem} \label{fam6a} Up to projectivities, there is $45$-dimensional  family of tuples
$$(X,E,\sL)\hbox{ with } (d_E,g_E,d_\sL)=(14,14,19)$$
of a cubic hypersurface $X$, curves $E$ residual to a rational normal curve $R$ of degree $d_R=4$ in a complete intersection
$Q \cap X \cap X'$, where $Q$ is a quadric and $X'$ a further cubic hypersurface and $\sL=\omega_E(-D)$ for $D$ an effective divisor of degree $7$ on $E$ such that $N= H^0_*(\sL)$
has Betti table $\beta^S(N)$
\begin{center}
\begin{tabular}{c|ccccccc} 
  & 0 & 1 & 2 & 3  \\ \hline
0 & 7 & 15 & 4 & .  \\
1 & . &  . & 8 &  3 \\
2 & . & .  &  .  & 1 \\
\end{tabular}.
\end{center}
%%7-1=19+1-14, 19+7=2*14-2
For general choices, the $S_X$ resolution gives a good matrix factorization of desired type. \end{theorem}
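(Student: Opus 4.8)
The plan is to follow the template used for Theorems~\ref{fam5}, \ref{fam5a} and \ref{fam6}: write down an explicit (uni)rational family of tuples $(X,E,\sL)$, count its parameters, and then certify that the general member has all the required properties --- $E$ and the cubic $X$ smooth, $N=H^0_*(\sL)$ with the prescribed Betti table, and the $S_X$-resolution of $N$ a good matrix factorization of the desired shape --- by exhibiting a single random example over a finite prime field $\FF_p$ and invoking semicontinuity (such an example being the reduction mod $p$ of one defined over $\ZZ_{(p)}$, whose generic fibre works over $\QQ$). That verification is the computation documented in \href{http://www.math.uni-sb.de/ag/schreyer/home/computeralgebra.htm}{MatFac15}.

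The Betti table $\beta^S(N)$ is the same one that occurs in Theorem~\ref{fam6}, so the module- and sheaf-theoretic bookkeeping carries over verbatim; in particular $\beta^S(N)$ forces $\chi(\sL)=h^0(\sL)-h^1(\sL)=6$ and $\deg E=14$, while leaving $g_E$ and $\deg\sL$ free subject only to $\deg\sL-g_E=5$. The new feature is the choice $g_E=14$, hence $\deg\sL=19$. The construction imitates that of Theorem~\ref{fam6}. Start with a rational normal quartic $R\subset\PP^4$ together with a point $p_0\in R$; choose a quadric $Q$ through $R$ and $p_0$; choose lines $\ell_1,\dots,\ell_7$ through $p_0$ and let $p_i$ be the residual point of $\ell_i\cap Q$; finally choose two cubic hypersurfaces $X,X'$ through $R\cup\{p_0,\dots,p_7\}$, subject to the extra requirement that the complete intersection $Q\cap X\cap X'$ of type $(2,3,3)$ acquire one node off $R$. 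By liaison the residual curve $E'$ of $R$ in $Q\cap X\cap X'$ has arithmetic genus $15$ (this is the genus already recorded for such residual curves before Theorem~\ref{fam5}), so the node makes $E'$ one-nodal; let $E$ be its normalization, of geometric genus $14$, set $D=p_0+\dots+p_6$ (an effective divisor of degree $7$ on $E$) and $\sL=\omega_E(-D)$, of degree $26-7=19$. Then $N=H^0_*(\sL)$ is a module on $\PP^4$ annihilated by the cubic $X$, and the $2$-periodic part of its $S_X$-resolution is the candidate matrix factorization.

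The parameter count is that of Theorem~\ref{fam6} with the one nodal condition added: $1$ for $R\cup\{p_0\}$ up to $\PGL(5)$ (the cross ratio), $4$ for $Q$, $3\cdot 7$ for the lines, $13$ for $X$ and $7$ for $X'$ (its equation counting only modulo those of $Q$ and $X$), minus $1$ for the nodal condition, i.e. $1+4+3\cdot 7+13+7-1=45$; a tangent-space computation at a general point shows this space is generically smooth of the stated dimension. All the remaining assertions --- smoothness of $E$ and $X$, the table $\beta^S(N)$, goodness of the resulting matrix factorization --- are open conditions, so it suffices to verify them on one explicit example over $\FF_p$, which is what the \Mac\ computation in \href{http://www.math.uni-sb.de/ag/schreyer/home/computeralgebra.htm}{MatFac15} carries out.

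I expect the one genuinely non-formal point to be the control of the genus. One has to check that inside the $46$-dimensional Theorem~\ref{fam6}-type family the locus of one-nodal complete intersections $Q\cap X\cap X'$ with the node off $R$ is an honest divisor --- nonempty, reduced, and not absorbed into the other constraints --- so that the normalizations really sweep out a $45$-dimensional family of genus-$14$ curves, and that passing from $E'$ to its normalization $E$ does not perturb the syzygies, so that the module $N=H^0_*(\sL)$, computed on the singular model in $\PP^4$, still has Betti table $\{7,15,4;\,8,3;\,1\}$. Both points are exactly what the explicit random example in \href{http://www.math.uni-sb.de/ag/schreyer/home/computeralgebra.htm}{MatFac15} is meant to settle.
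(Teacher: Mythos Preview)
The paper's proof is a single line — ``same strategy as for the family in Theorem~\ref{fam5a}'' — and that strategy is much simpler than yours. One constructs a smooth curve $E$ of degree $14$ and genus $14$ in $\PP^4$ \emph{directly} via its $\PP^3$ model (possible since $h^1(\sO_E(1))=4$), not as a residual; the pair $(E,X)$ then depends on $38$ parameters as in \ref{fam5a}, and the choice of $7$ points on $E$ for the divisor $D$ contributes $7$ more, giving $38+7=45$. This family is \emph{not} unirational (because of the point choices on $E$), which is consistent with Corollary~\ref{dim fam}, where \ref{fam6a} is listed among the three dominant, non-unirational families. The clause ``residual to a rational normal curve $R$ in $Q\cap X\cap X'$'' in the statement is evidently a copy-paste slip from Theorem~\ref{fam6}: as the proof of \ref{fam5a} already notes, a maximal-rank curve of degree $14$ and genus $14$ lies on \emph{no} quadric, so it cannot arise that way.

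Your nodal-degeneration route is inventive, but it diverges from the paper and has a genuine gap. First, your construction would be \emph{unirational}, contradicting the role \ref{fam6a} plays in the paper. Second, and more seriously, you cannot appeal to the existing \texttt{MatFac15} computation to certify the open conditions: that package implements the paper's construction (the \ref{fam5a} strategy), not a nodal $(2,3,3)$ complete intersection, so the Betti table of $N$, smoothness of the resulting $C$, and goodness of the matrix factorization are all unverified for your family. There are also technical wrinkles you would have to resolve: the curve actually sitting in $\PP^4$ is the nodal $E'$ of arithmetic genus $15$, while your $\sL=\omega_E(-D)$ lives on the abstract normalization $E$; you would need to argue that $H^0_*(\sL)$, computed via $E\to E'\hookrightarrow\PP^4$, has the prescribed Betti table, and that the one-nodal locus is genuinely a divisor inside the \ref{fam6}-family. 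None of this is covered by the cited computation.
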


\begin{proof} This follows by the same strategy as for the family in Theorem \ref{fam5a}.
\end{proof}

\section{Tangent space computations}\label{tangComp}

Let $(N,X)$ be a pair of an auxiliary module $N$ in one of the examples of Section \ref{constructions} and a cubic hypersurface $X$ whose equation annihilates $N$. We would like to estimate the dimension of the family of $M$'s, and hence the dimension of the family of curves of genus $15$ obtained from our family of pairs $(N,X)$. 
For fixed $X$, the group $\Ext^1_{S_X}(M,M)_0$ is the space of infinitesimal homogeneous deformations of $M$ (or the matrix factorization). Since cubic threefolds depend on $10$-parameters up to projectivities, we expect that $\dim \Ext^1_{S_X}(M,M)_0 = 32$ for general choices of $(N,X)$.  However, the association 
\begin{align} \label{NtoM}
(N,X) \mapsto (M,X) 
\end{align}
might not be surjective.  Let $P = \ker(M \to N)$ be the kernel of the MCM approximation, which turned out to need no free summand in all cases. Then $P$ sits in a short exact sequence
$$
0 \to P \to M \to N \to 0, 
$$
and has finite projective dimension as an $S_X$-module.
We have a diagram

$$ 
\xymatrix{
\Ext^1_{S_X}(M,P) \ar[r] &        \Ext^1_{S_X}(M, M) \ar[r]             &   \Ext^1_{S_X}(M,N) \ar[r] & \Ext^2_{S_X}(M, P) \\
&   & \Ext^1_{S_x}(N,N) \ar[u] & \\
 &  & \Hom_{S_X}(P,N) \ar[u] &\\  }
% & & \Hom_{S_X} (M,N) \ar[u] &\\
% & & \Hom_{S_X} (N,N) \ar[u] &\\
% & & 0 \ar[u] \\}
$$
By the periodicity of the $S_X$-resolution of $M$, we have 
$$\Ext^i_{S_X}(M,P) \cong \Ext^{i+2}_{S_X}(M(3),P) \hbox{ for }i \ge 1.
$$
Since
$\Ext^i_{S_X}(M,S_X) = 0$ for $i\ge 1$, we obtain  $\Ext^i_{S_X}(M,P) = 0$ for $i\ge 1$, because $P$ has finite projective dimension.
Thus 
$$
\Ext^1_{S_X}(M, M) \cong   \Ext^1_{S_X}(M,N). 
$$
For our families constructed in Section \ref{constructions} in Theorems \ref{fam1}, \ref{fam2}, \ref{fam4} and Proposition \ref{fam3}, we expect $\dim \Ext^1_{S_X}(N,N)_0=32$ as the total family of $(N,X)$ depends on $42$ parameters, and cubics on $10$. As it turns out, $\dim \Ext^1_{S_X}(N,N)_0=32$ holds for examples from these 4 families.
So the map (\ref{NtoM}) is surjective on the level of tangent spaces if and only if the map
$$
\Hom_{S_X}(P,N)_0 \to \Ext^1_{S_X}(N,N)_0
$$ 
is zero. For the examples from the families Theorem \ref{fam5}, \ref{fam5a}, \ref{fam6}  and \ref{fam6a}, this map cannot be trivial, since $\dim \Ext^1_{S_X}(M,N)_0 =32$, while $\Ext^1_{S_X}(N,N)_0$
has dimension 37, 36, 36 and 35 respectively.
In all eight families of Section \ref{constructions} we  find that for general choices the map
$$
\Hom_{S_X}(N,N)_0  \to \Hom_{S_X}(M,N)_0
$$ 
is an isomorphism between 1-dimensional spaces. Hence there is only one map $M \to N$, and $\dim \Hom_{S_X}(P,N)_0$ is the dimension of the kernel
$$ 
\Ext^1_{S_X}(N,N)_0 \to \Ext^1_{S_X}(M,N)_0
$$ 

\begin{proposition} \label{tangentSpaces} $\dim \Hom_{S_X}(M,N)_0=1$ holds in a randomly chosen example over a moderate sized finite field in all eight families from Section \ref{constructions}. Moreover,
$$
\dim   \Hom_{S_X}(P,N)_0=
\begin{cases}
0 \\
3 \\
4\\
6 \\
7 \\
\end{cases} \hbox{ holds for some examples from }
\begin{cases}
\ref{fam4} \\
\ref{fam1}, \ref{fam3}, \ref{fam6a} \\
\ref{fam5a} \\
\ref{fam2}, \ref{fam5} \\
\ref{fam6} \\
\end{cases} \qquad \hfill
$$
and
$$
\dim \Ext^1_{S_X}(N,N)_0 =
\begin{cases}
32 \\
35 \\
36 \\
37 \\
\end{cases}
\hbox{ holds in an open set of the families}
\begin{cases}
\ref{fam1}, \ref{fam3}, \ref{fam4} \\
\ref{fam6a}\\ 
\ref{fam6}, \ref{fam5a} \\
\ref{fam5} \\
\end{cases} \hfill
$$
\end{proposition}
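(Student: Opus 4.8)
The plan is to prove the Proposition by an explicit computation in one random example per family, together with a semicontinuity argument for the single ``open set'' clause, exactly in the spirit of the proof of Theorem~\ref{fam1}. The first task is to reduce all five numbers that occur to cohomology of finite complexes of free $S_X$-modules. Fix a family and a point $(N,X)$ of its parameter space, and let $S_X$ be the homogeneous coordinate ring of the cubic $X$. The module $N$ has a finite minimal free $S_X$-resolution $F_\bullet$ with the Betti table recorded in Section~\ref{Betti Tables}, so $\Hom_{S_X}(N,N)_0$ and $\Ext^1_{S_X}(N,N)_0$ are computed directly as cohomology of $\Hom_{S_X}(F_\bullet,N)$. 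The module $M=\coker\varphi$ is presented over $S_X$ by the matrix factorization $(\varphi,\psi)$ coming from the $2$-periodic part of the $S_X$-resolution of $\Gamma_*(\sO_C)$; the two-term presentation $\overline G\to\overline F\to M\to 0$, extended one step by $\overline F\to\overline G$, already suffices to compute $\Hom_{S_X}(M,N)_0$ and $\Ext^1_{S_X}(M,N)_0$, so the infinite resolution of $M$ causes no difficulty. Finally $P=\ker(M\to N)$, where $M\to N$ is the MCM-approximation surjection (recall $F=0$ in all our cases, Section~\ref{MatFac}): it is finitely generated of finite projective dimension over $S_X$, its presentation is read off from the first syzygies, and $\Hom_{S_X}(P,N)_0$ is computed the same way. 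Equivalently, applying $\Hom_{S_X}(-,N)$ to $0\to P\to M\to N\to 0$ yields the exact sequence
\[
0\to\Hom_{S_X}(N,N)_0\to\Hom_{S_X}(M,N)_0\to\Hom_{S_X}(P,N)_0\to\Ext^1_{S_X}(N,N)_0\to\Ext^1_{S_X}(M,N)_0,
\]
so once the first map is an isomorphism, $\dim\Hom_{S_X}(P,N)_0$ equals the dimension of the kernel of the connecting map. Note also that $\dim\Hom_{S_X}(M,N)_0\ge 1$ is automatic, the surjection $M\to N$ being a nonzero homogeneous homomorphism of degree $0$, so only the bound $\le 1$ needs to be checked.

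For each of Theorems~\ref{fam1}, \ref{fam2}, \ref{fam4}, \ref{fam5}, \ref{fam5a}, \ref{fam6}, \ref{fam6a} and Proposition~\ref{fam3} one takes the random example over a moderate-size prime field $\FF_p$ already produced in Section~\ref{constructions} --- where the curve, the cubic, the module $N$ and the resulting matrix factorization are shown to have all the expected properties --- and runs the computation above with the package MatFac15. One reads off in every case $\dim\Hom_{S_X}(N,N)_0=\dim\Hom_{S_X}(M,N)_0=1$ with the restriction map an isomorphism (so that, up to scalar, $M\to N$ is the only homogeneous homomorphism of degree $0$), together with $\dim\Ext^1_{S_X}(M,N)_0=32$; and then $\dim\Ext^1_{S_X}(N,N)_0$ and $\dim\Hom_{S_X}(P,N)_0$ come out with precisely the values tabulated in the statement. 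Since ``holds in a randomly chosen example'' and ``holds for some examples'' are literal statements about these computations, this already proves the first two assertions.

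For the assertion that $\dim\Ext^1_{S_X}(N,N)_0$ takes the stated value on an open subset of the corresponding family, recall that $(N,X)\mapsto\dim_K\Ext^1_{S_X}(N,N)_0$ is upper semicontinuous in the flat families of Section~\ref{constructions}, whose parameter spaces are irreducible. Viewing the chosen $\FF_p$-example as the reduction modulo $p$ of a point defined over $\ZZ_{(p)}$ and using semicontinuity over $\Spec\ZZ_{(p)}$, the value computed over $\FF_p$ is an upper bound for the value at the generic, characteristic-$0$ point; combined with irreducibility this gives the inequality ``$\le$'' on a dense open set. The reverse inequality --- that the open-set value is exactly the computed number, i.e. that the random example is a general point --- is accepted here, as in the Remark following Proposition~\ref{fam3}, on the grounds that the alternative would force the independently chosen examples over a field of size $\approx 10^4$ to lie by accident on a proper closed subvariety. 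The main delicate points are therefore not mathematical depth but bookkeeping: flatness of the families at the chosen points, guaranteed because $N$ and $M$ there have exactly the Betti tables predicted in Sections~\ref{Betti Tables} and~\ref{constructions}; the replacement of the infinite $S_X$-resolution of $M$ by its finite matrix-factorization presentation; and the passage from a random finite-field point to a characteristic-$0$ open set, carried out exactly as in the proof of Theorem~\ref{fam1}.
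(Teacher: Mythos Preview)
Your computational reduction and the semicontinuity argument for the upper bound on $\dim\Ext^1_{S_X}(N,N)_0$ are exactly what the paper does; the paper's proof is literally ``by computation documented in MatFac15'' plus one extra sentence. The only substantive difference concerns the \emph{lower} bound needed to turn ``$\le$ on an open set'' into ``$=$ on an open set''. You treat this as a non-rigorous probabilistic statement, in the style of the Remark after Proposition~\ref{fam3}. The paper instead gives a rigorous argument: ``The tangent space $\dim\Ext^1_{S_X}(N,N)_0$ cannot be smaller by the dimension count in our families.'' Concretely, the families of pairs $(N,X)$ constructed in Section~\ref{constructions} have dimensions $42$, $42$, $42$, $47$, $46$, $46$, $45$ (for \ref{fam1}, \ref{fam3}, \ref{fam4}, \ref{fam5}, \ref{fam5a}, \ref{fam6}, \ref{fam6a}), and since cubic threefolds depend on $10$ parameters up to projectivities, the fibre of $N$'s over a fixed general $X$ has dimension at least $d-10$, i.e.\ $32$, $32$, $32$, $37$, $36$, $36$, $35$. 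As $\Ext^1_{S_X}(N,N)_0$ is the tangent space to the deformation functor of $N$ with $X$ fixed, this gives the desired lower bound, matching the computed upper bound in every case. So you missed a short rigorous step that the paper actually supplies; replacing your probabilistic acceptance by this dimension-count argument closes the gap and makes your proof coincide with the paper's.
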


\noindent
\textit{Proof} by  computation documented in \href{http://www.math.uni-sb.de/ag/schreyer/home/computeralgebra.htm}{MatFac15}. The tangent space 
$$ 
\dim \Ext^1_{S_X}(N,N)_0
$$ cannot be smaller by the dimension count in our families.
\qed

\begin{corollary}\label{dim fam} The family from Theorem 
$$
\begin{cases}
\ref{fam1}, \ref{fam6} \\
\ref{fam5} \\
\ref{fam4}, \ref{fam5a} ,\ref{fam6a}
\end{cases}
\hbox{ maps onto  an (at least) }
\begin{cases}
39\\
41 \\
42 \\
\end{cases}
\hbox{-dimensional} \hfill
$$
subvariety 
 of $\sM_{15}$ ,respectively.
 The non-dominant families are unirational, the dominant family from Theorem \ref{fam4} is uniruled.
\end{corollary}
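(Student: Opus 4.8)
\emph{Proof proposal.} The plan is to read off each assertion by combining, for the relevant family of Theorems \ref{fam1}--\ref{fam6a}, the dimension $\delta$ of its (irreducible, mostly unirational) parameter space $\mathcal T$ with an upper bound for the dimension of the generic fibre of the induced rational map to $\sM_{15}$ --- this bound being precisely what the tangent--space computations of Section \ref{tangComp} and Proposition \ref{tangentSpaces} deliver. Concretely, for a general tuple in $\mathcal T$ one forms the auxiliary pair $(N,X)$, then the maximal Cohen--Macaulay approximation $M$, then by Theorem \ref{monad} and Theorem \ref{main} the smooth curve $C\subset X\subset\PP^4$ of degree $16$ and genus $15$ with $\sO_C(1)\in W^4_{16}(C)$ an isolated point; this defines a rational map $\phi\colon\mathcal T\dashrightarrow Z\subseteq\sM_{15}$ onto its image $Z$. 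Since the Brill--Noether number is $\rho=15-5\cdot 3=0$, the map $\widetilde\sM^4_{15,16}\to\sM_{15}$ is generically finite, and because $N$ already recovers the triple $(E,\sO_E(1),\sL)$, the map $\mathcal T\dashrightarrow\{(N,X)\}$ is generically finite too; hence, via Theorem \ref{main}, $\dim Z$ equals $\delta$ minus the dimension of the generic fibre of $(N,X)\mapsto(M,X)$ of (\ref{NtoM}).

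Next I would bound that fibre dimension. A general $C\in\sH$ lies on a unique cubic threefold, and a general cubic threefold has only finitely many automorphisms, so the generic fibre of $(N,X)\mapsto(M,X)$ may be computed with $X$ fixed, and by generic smoothness in characteristic $0$ its dimension equals that of its Zariski tangent space at a general point, which is contained in $\ker\bigl(\Ext^1_{S_X}(N,N)_0\to\Ext^1_{S_X}(M,M)_0\bigr)$. By Section \ref{tangComp} we have $\Ext^1_{S_X}(M,M)_0\cong\Ext^1_{S_X}(M,N)_0$, and since $\dim\Hom_{S_X}(N,N)_0=\dim\Hom_{S_X}(M,N)_0=1$ (Proposition \ref{tangentSpaces}), the $\Hom_{S_X}(-,N)_0$--sequence of $0\to P\to M\to N\to 0$ identifies this kernel with the image of the injection $\Hom_{S_X}(P,N)_0\hookrightarrow\Ext^1_{S_X}(N,N)_0$; thus the generic fibre has dimension at most $\dim\Hom_{S_X}(P,N)_0$, which by upper semicontinuity is at most the value computed in the random example of Proposition \ref{tangentSpaces}. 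Therefore $\dim Z\ge\delta-\dim\Hom_{S_X}(P,N)_0$, i.e.\ $\dim Z\ge 42-3=39$ for \ref{fam1}, $\ge 46-7=39$ for \ref{fam6}, $\ge 47-6=41$ for \ref{fam5}, and $\ge 42-0$, $\ge 46-4$, $\ge 45-3$ for \ref{fam4}, \ref{fam5a}, \ref{fam6a} respectively; in the last three cases this forces $\dim Z=42$, so $\phi$ dominates $\sM_{15}$.

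For the final sentence: the non-dominant families \ref{fam1}, \ref{fam6}, \ref{fam5} have unirational parameter space $\mathcal T$ by the respective theorems, and $\phi$ is dominant onto $Z$, so $Z$ is unirational. For the uniruledness statement I would use that in family \ref{fam4} the submodule $N\subset\Gamma_*(\sL)$ varies, for fixed $(E,\sO_E(1),X,\sL)$, in a $\PP^1$; these pencils cover $\mathcal T_{\ref{fam4}}$, and since $\phi$ is there generically finite (generic fibre of dimension $0$, $\delta=\dim\sM_{15}=42$), a general such $\PP^1$ is not contracted, so its image is a rational curve through a general point of $\sM_{15}$; hence $\sM_{15}$ is uniruled. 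Restricting to the fibre over a fixed general cubic $X$, which has dimension $42-10=32$ and is still swept out by the same $\PP^1$'s, yields the uniruled $32$-dimensional family of smooth genus $15$, degree $16$ curves on $X$ of Theorem \ref{OnGenCubic}.

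The hard part will be the containment $\ker d\phi\subseteq\ker\bigl(\Ext^1_{S_X}(N,N)_0\to\Ext^1_{S_X}(M,M)_0\bigr)$: one must verify that deformations of the cubic $X$ really cancel between source and target of $\phi$ (this is where unicity of the cubic through a general $C$ and finiteness of the automorphism group enter), that $\mathcal T\dashrightarrow\{(N,X)\}$ is genuinely generically finite so that $T\mathcal T$ is identified with the deformation space of the pair $(N,X)$, and that the randomly chosen example over $\FF_q$ is general enough for the semicontinuity bound on $\dim\Hom_{S_X}(P,N)_0$ to hold on a dense open of $\mathcal T$ --- the usual caveat of the computational method employed throughout the paper. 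Everything else is bookkeeping with the numbers already established in Theorems \ref{fam1}--\ref{fam6a} and Proposition \ref{tangentSpaces}.
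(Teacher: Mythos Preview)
Your proposal is correct and follows essentially the same route as the paper: bound the generic fibre of $\phi$ by the kernel of $\Ext^1_{S_X}(N,N)_0\to\Ext^1_{S_X}(M,N)_0\cong\Ext^1_{S_X}(M,M)_0$, identify that kernel with $\Hom_{S_X}(P,N)_0$ via the long exact sequence, and read off the lower bounds $\delta-\dim\Hom_{S_X}(P,N)_0$ from Proposition~\ref{tangentSpaces}. The paper's own proof is much terser---it simply observes that the random point might be a ramification point, so the tangent-space numbers yield only lower bounds on $\dim Z$, and cites Theorem~\ref{uniruled} for the uniruledness---whereas you spell out the deformation-theoretic identifications and give the $\PP^1$-pencil argument for \ref{fam4} directly; both are the same argument at different levels of detail.
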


\begin{proof} In principle, it could be that the map from our families to $\sM_{15}$ is ramified at the given randomly chosen point. So the tangent space computation of Proposition \ref{tangentSpaces} gives only a lower bound for the dimension of the family in $\sM_{15}$. In reality it is very unlikely that equality does not hold. The last statement
follows from Theorem \ref{uniruled}.
\end{proof}

{\it Proof of Theorem \ref{uniruled} and Theorem \ref{OnGenCubic}.} The last choice in the construction of the dominant family from Theorem \ref{fam4} is the choice of a point in $\PP^1$. Thus $\widetilde \sM^4_{15,16}$ is ruled by lines. The same holds for a component of the Hilbert scheme of genus $15$ and degree $16$ curves on a general cubic threefold.
\qed

{\it Proof of Theorem \ref{random curve}.} We can use the dominant family from Theorem \ref{fam4}, Theorem \ref{fam5a} or Theorem \ref{fam6a}. For the family from Theorem \ref{fam4}, all computations but the choice of an effective divisor of degree $6$ on $E$  are computations with modules of fixed degree and regularity in a fixed number of variables. So these computations run in a  number of field operations in $\FF_q$ independent of the size of $q$. Hence they contribute with
$O((\log q)^2)$ to the running time. The computation of an effective divisor uses factorization of a univariate polynomial of fixed degree, hence  Berlekamp's algorithm, which runs in $O((\log q)^3)$ field operations in $\FF_q$, see, e.g., 
\cite{GG}, Theorem 14.14.
\qed

\begin{remark} The points in $\sM_{15}(\FF_q)$ which we can find with the probabilistic algorithm for elements from Theorem \ref{fam4} form only a fraction. If $B$ denotes the parameter space of the family, then both $B \to \sM^4_{15,16} \to \sM_{15}$ are generically finite. Thus, it is reasonable to expect  about $0.39\approx 0.63^2$ of the points in $\sM_{15}(\FF_q)$
to be in the image of $B(\FF_q)$ for reasonably large $q$. See \cite{EHS}, Section 2 for a discussion.
\end{remark}

\begin{remark} The construction of the family from Theorem \ref{fam5}, \ref{fam6} has some easy variants. Instead of starting with a rational normal curve  $R$ of degree $4$, we might start with some other curves $R$ of degree $4$ and take as degree $14$ curves $E$,  the residual in a complete intersection $(2,3,3)$. I inspected the cases, where  $R$ is
 \begin{enumerate}
\item an elliptic curve $R_1^4$ of degree 4, 
\item a rational normal curve $R^4_0$, the case of Theorem \ref{fam5} and \ref{fam6}
\item the disjoint union $R^1_3 \cup L$ of a plane elliptic curve and a line,
\item the disjoint union $R^3_0 \cup L$ of a twisted cubic and a line ,
\item the disjoint union  $C_1\cup C_2$ of two conics,
\item the disjoint union $C \cup L_1 \cup L_2$ of a conic and two lines,
\item the disjoint union $L_1 \cup \ldots \cup L_4$ of four lines.
 \end{enumerate}
None of these  12 further families dominates $\sM_{15}$. Indeed, except for the family from Theorem \ref{fam5}, which is 41-dimensional and its variant (4), which turned out to be 40-dimensional all of these unirational families are of dimension $\le 39$, with equality in 5 further cases, see \href{http://www.math.uni-sb.de/ag/schreyer/home/computeralgebra.htm}{MatFac15}. One natural explanation, why 
only the not obviously unirational families are dominant, and nearly all other constructions lead to 39-dimensional families, could be Conjecture \ref{mrc}. Of cause the evidence for this conjecture is rather weak.
 \end{remark}

\bigskip

\vbox{\noindent Author Address:\par
\smallskip
\noindent{Frank-Olaf Schreyer}\par
\noindent{Mathematik und Informatik, Universit\"at des Saarlandes, Campus E2 4, 
D-66123 Saarbr\"ucken, Germany}\par
\noindent{schreyer@math.uni-sb.de}\par
}

\end{document}